\newtheorem{theorem}{Theorem}[section]
\newtheorem{proposition}[theorem]{Proposition}
\newtheorem{lemma}[theorem]{Lemma}
\newtheorem{conjecture}[theorem]{Conjecture}
\newtheorem{definition}[theorem]{Definition}
\newtheorem{problem}[theorem]{Problem}
\theoremstyle{definition}
\numberwithin{equation}{section}
\numberwithin{figure}{section}
\newcommand{\su}{\subseteq}
\newcommand{\sm}{\setminus}
\newcommand{\F}{\mathbb{F}}
\newcommand{\spn}{\operatorname{span}}
\begin{document}
\title{Rota's basis conjecture holds for random bases of vector spaces}
\author{Lisa Sauermann\thanks{Department of Mathematics, Massachusetts Institute of Technology, Cambridge, MA.
Email: \href{lsauerma@mit.edu}{\nolinkurl{lsauerma@mit.edu}}. Research supported by NSF Award DMS-1953772.}}

\maketitle

\begin{abstract}\noindent
In 1989, Rota conjectured that, given $n$ bases $B_1,\dots,B_n$ of the vector space $\mathbb{F}^n$ over some field $\mathbb{F}$, one can always decompose the multi-set $B_1\cup \dots \cup B_n$ into transversal bases. This conjecture remains wide open despite of a lot of attention. In this paper, we consider the setting of random bases $B_1,\dots,B_n$. More specifically, our first result shows that Rota's basis conjecture holds with probability $1-o(1)$ as $n\to \infty$ if the bases $B_1,\dots,B_n$ are chosen independently uniformly at random among all bases of $\mathbb{F}_q^n$ for some finite field $\mathbb{F}_q$ (the analogous result is trivially true for an infinite field $\mathbb{F}$). In other words, the conjecture is true for almost all choices of bases $B_1,\dots,B_n\su \F_q^n$. Our second, more general, result concerns random bases $B_1,\dots,B_n\su S^n$ for some given finite subset $S\su \F$ (in other words, bases $B_1,\dots,B_n$ where all vectors have entries in $S$). We show that when choosing bases $B_1,\dots,B_n\su S^n$ independently uniformly at random among all bases that are subsets of $S^n$, then again Rota's basis conjecture holds with probability $1-o(1)$ as $n\to \infty$.
\end{abstract}

\section{Introduction}

Rota's basis conjecture (see \cite[Conjecture 4]{huang-rota}) is a famous conjecture from 1989 concerning bases in vector spaces. Given $n$ bases $B_1,\dots,B_n$ of an $n$-dimensional vector space, the conjecture asserts that one can  decompose the multi-set $B_1\cup \dots \cup B_n$ into bases of the vector space that are \emph{transversal} with respect to the original bases $B_1,\dots,B_n$. Here, a basis $B\su B_1\cup \dots \cup B_n$ is called transversal with respect to $B_1,\dots,B_n$ if it contains exactly one vector from each of $B_1,\dots,B_n$ (where $B_1,\dots,B_n$ are interpreted as subsets of the multi-set $B_1\cup \dots \cup B_n$ in the natural way, and if a vector $v$ appears in two or more of the bases $B_1,\dots,B_n$, then the multiple copies of $v$ in the multi-set $B_1\cup \dots \cup B_n$ are distinguished by which set $B_i$ they came from). Note that, as each basis $B_1,\dots,B_n$ consists of $n$ vectors, the union $B_1\cup \dots \cup B_n$ has size $n^2$. Each transversal basis consists again of $n$ vectors, so a decomposition of  $B_1\cup \dots \cup B_n$ into transversal bases must have exactly $n$ transversal bases. Since every $n$-dimensional vector space over any field $\F$ is isomorphic to $\F^n$, the conjecture can be restated as follows.

\begin{conjecture}[Rota's basis conjecture]\label{conjecture-rota}
Let $\F$ be a field and let $B_1,\dots,B_n\su \F^n$ be bases of the vector space $\F^n$. Then the multi-set $B_1\cup \dots\cup B_n$ can be partitioned into $n$ bases of $\F^n$, which are transversal with respect to the original bases $B_1,\dots,B_n$.
\end{conjecture}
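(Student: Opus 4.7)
My plan is to attack the conjecture via the Alon--Tarsi polynomial method. Arrange the bases as the columns of a block matrix $M=[B_1\mid B_2\mid \cdots \mid B_n]$, and index the columns by pairs $(i,j)\in[n]\times[n]$, where column $(i,j)$ is the $j$th vector of $B_i$. A transversal decomposition is encoded by a Latin square $L\colon[n]\times[n]\to[n]$ in which the cells with $L(i,j)=k$ select the $k$th transversal basis. Introducing formal variables $y_{i,j,k}$ and summing signed products $\prod_{k}\det(C_k)$ over all Latin squares $L$, where $C_k$ is the $n\times n$ submatrix of $M$ picked out by $L^{-1}(k)$, reduces the conjecture to showing a certain polynomial identity in the entries of $M$ does not vanish. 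Following Onn's reformulation, this reduces to a statement about the Alon--Tarsi constant $\operatorname{AT}(n)$ being nonzero in the ground characteristic.

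\textbf{Step-by-step plan.} First, I would make the reduction precise: for each unordered decomposition of $[n]\times[n]$ into $n$ transversals, the signed contribution is $\operatorname{sign}(L)\prod_k\det(C_k)$, and collecting terms gives a polynomial whose leading monomial has coefficient $\pm\operatorname{AT}(n)$. Second, I would try to prove the Alon--Tarsi conjecture in full by analyzing the permanent of a generic $n\times n$ matrix of indeterminates and refining the parity arguments of Drisko ($n+1$ prime) and Glynn ($n-1$ prime, $n$ odd); the plan is to exhibit a sign-reversing involution on Latin squares that fails precisely on a set whose cardinality one can evaluate exactly. Third, I would handle small characteristics where $\operatorname{AT}(n)$ may vanish modulo $p$ by a separate direct exchange argument: start from any partial transversal decomposition and, using the matroid intersection of the linear matroid with the partition matroid induced by the $B_i$, show that a shortest augmenting sequence exists unless the decomposition is already complete.

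\textbf{Main obstacle.} The hard part is, of course, the Alon--Tarsi conjecture itself, which has resisted direct attack for more than three decades and is only known in scattered cases. Even granting the polynomial approach, one still needs a uniformly nonzero sign count across \emph{all} $n$, and the obvious involutions (row-swap, column-swap, symbol-swap) produce a net count that is known to be nonzero only under the prime-adjacent hypotheses. An alternative route is to bypass the algebraic reduction and instead prove a matroidal augmenting-structure theorem: any partial decomposition with $n-1$ complete transversals and two incomplete ``leftover'' sets admits a rearrangement completing it. Here the obstruction is that current techniques, including Pokrovskiy's probabilistic absorption that yields $n-o(n)$ disjoint transversal bases, break down in exactly the last $o(n)$ steps because no bipartite-matching-style augmenting theorem is known in the presence of the simultaneous matroid and transversal constraints; overcoming this would presumably require a genuinely new combinatorial or topological input beyond what is in the literature.
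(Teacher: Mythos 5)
There is a genuine gap, and it is unavoidable: the statement you are trying to prove is Rota's basis conjecture itself, which is open, and the paper does not prove it either --- it is stated there as Conjecture \ref{conjecture-rota} precisely because it remains unresolved; the paper's actual theorems only establish the conjecture with probability $1-o(1)$ for \emph{random} bases (uniformly random bases of $\F_q^n$, or random bases inside $S^n$ or inside a $c$-dispersed set $T$), via a completely different argument: exposing the vectors of each basis one at a time, using dispersedness to bound the probability that a new vector falls in a given subspace, and then verifying Hall's condition for a bipartite graph whose edges are transversal sets $X_h\cup Y_j$ built from half-columns. Your proposal, by contrast, is a plan rather than a proof. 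Step 2, ``prove the Alon--Tarsi conjecture in full by \ldots exhibiting a sign-reversing involution whose fixed-point set one can evaluate exactly,'' is exactly the open problem; no such involution is exhibited, and the known sign-reversing involutions give nonvanishing only under the prime-adjacent hypotheses of Drisko and Glynn, as you yourself note. Step 3's matroid-intersection augmenting argument is likewise asserted, not proved, and you concede that no augmenting-path theorem is known under the simultaneous matroid and transversal constraints. A proposal whose key steps are ``resolve a famous open conjecture'' and ``prove a structural theorem not known in the literature'' does not constitute a proof.

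Two further technical points about the reduction you sketch. First, the Huang--Rota implication from Alon--Tarsi to Rota works for even $n$ in characteristic zero; for odd $n$ the naive signed sum over Latin squares vanishes identically (row-permutation already reverses sign), so your claim that ``the leading monomial has coefficient $\pm\operatorname{AT}(n)$'' needs the even/odd distinction and the corresponding modified statements. Second, even granting $\operatorname{AT}(n)\neq 0$ over $\mathbb{Z}$, over a field of characteristic $p$ the polynomial identity only helps when $p$ does not divide the relevant integer, so positive characteristic is not handled by your Step 2 at all and is deferred to the unproven Step 3. If you want a statement you can actually prove with current technology, the realistic targets are the random-basis versions (Theorems \ref{thm-1}--\ref{thm-3} of the paper) or the known partial results ($n/2-o(n)$ disjoint transversal bases, $n-o(n)$ nearly-spanning independent transversals), not Conjecture \ref{conjecture-rota} itself.
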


Rota's basis conjecture is also often considered in the more general setting of matroids rather than vector spaces. Despite of a lot of attention (including a ``Polymath'' project dedicated to the conjecture, see \cite{polymath}), the conjecture remains wide open, even in the setting of vector spaces.

Drisko \cite{drisko} and Glynn \cite{glynn} proved that Conjecture \ref{conjecture-rota} is true over fields $\F$ of characteristic zero if $n-1$ or $n+1$, respectively, is a prime number (more, precisely Drisko and Glynn proved the Alon-Tarsi conjecture concerning enumerations of cerrtain types of Latin squares for such $n$, and via earlier work of Huang and Rota \cite{huang-rota}, this implies Conjecture \ref{conjecture-rota} in these cases). Furthermore, the matroid version of Conjecture \ref{conjecture-rota} has been proved for certain special classes of matroids (specifically, for paving matroids \cite{geelen-humphries} and for strongly base orderable matroids \cite{wild}). Aharoni and Berger \cite[Theorem 10.4]{aharoni-berger} showed that the natural fractional relaxation of the conjecture holds, even in the matroid setting.

There have also been many works concerning partial decomposition or covering versions of the conjecture. Specifically, Buci\'{c}, Kwan, Pokrovskiy, and Sudakov \cite{bucic-et-al} (improving earlier results in \cite{dong-geelen, geelen-webb}) proved that one can find $n/2-o(n)$ disjoint transversal bases in $B_1\cup \dots \cup B_n$, Pokrovskiy \cite{prokrovskiy} proved that one can find $n-o(n)$ disjoint linearly independent transversal sets of size $n-o(n)$ in $B_1\cup \dots \cup B_n$, and Aharoni and Berger \cite[Assertion 8.11]{aharoni-berger} proved that $B_1\cup \dots \cup B_n$ can be decomposed into $2n$ disjoint linearly independent transversal sets (and the bound $2n$ has been slightly improved to $2n-2$ by Polymath, see \cite{polymath}).

In this paper, we consider the setting of random bases $B_1,\dots,B_n$. More precisely, in our first result, we assume that the bases $B_1,\dots,B_n$ are chosen independently uniformly at random among all bases of $\F^n$. If the field $\F$ is infinite, then with probability $1$, all $n^2$ vectors appearing in $B_1,\dots,B_n$ are in generic position, and Conjecture \ref{conjecture-rota} is trivially true in this case (as any decomposition of $B_1\cup \dots \cup B_n$ into $n$ transversal sets will give the desired transversal bases). However, this argument does not apply over finite fields, and in particular not if $n$ is large with respect to the field size. The problem of proving Rota's basis conjecture for random bases over some fixed finite field has been suggested by Ferber \cite{ferber}. Resolving this problem, we prove that the conjecture indeed holds asymptotically almost surely for independent uniformly random bases $B_1,\dots,B_n\su \F_q^n$ for any (finite) field size $q$.

\begin{theorem}\label{thm-1}
Fix a prime power $q\geq 2$. Let $B_1,\dots,B_n\su \F_q^n$ be independent uniformly random bases of the vector space $\F_q^n$ (each chosen uniformly at random among all bases of $\F_q^n$).  Then, with probability $1-o(1)$ as $n\to \infty$, the multi-set $B_1\cup \dots\cup B_n$ can be partitioned into $n$ bases of $\F_q^n$, which are transversal with respect to the original bases $B_1,\dots,B_n$.
\end{theorem}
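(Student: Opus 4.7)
The plan is to view the partition problem as a perfect-matching problem in an auxiliary hypergraph, and then to exploit the quasirandomness of the random bases together with the absorption method to construct the matching. After fixing an arbitrary ordering of each basis $B_i$, the $n^2$ vectors sit at the cells of the grid $[n]^2$. A partition of $B_1\cup\cdots\cup B_n$ into $n$ transversal bases is then the same as a coloring $\chi:[n]^2\to[n]$ whose restriction to each row $\{i\}\times[n]$ is a bijection and such that each color class forms a basis of $\F_q^n$. Equivalently, one seeks a perfect matching in the $n$-uniform hypergraph $H$ on $[n]^2$ whose edges are the transversal bases (that is, $n$-element sets containing exactly one cell from each row whose vectors form a basis of $\F_q^n$).

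My plan is to verify that $H$ is, with high probability, sufficiently quasirandom to apply general hypergraph matching machinery. The key inputs are: each vertex of $H$ lies in approximately $(n-1)!\cdot c_q$ edges, where $c_q=\prod_{i\ge 1}(1-q^{-i})$ is the limiting probability that $n$ independent uniformly random vectors in $\F_q^n$ form a basis; and for any two vertices in distinct rows, the codegree is a $1-o(1)$ fraction of the expected value. These estimates use standard computations about uniformly random invertible matrices over $\F_q$ (for example, the probability that a small random set is linearly independent, possibly conditioned on containing prescribed vectors), together with concentration of degrees and codegrees via Azuma-type inequalities applied to the tuple $(B_1,\dots,B_n)$.

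With quasirandomness in hand, the matching itself is built by combining the absorption method with a semi-random nibble in the style of Pippenger--Spencer and R\"odl. First, reserve a random subset $A_i\su B_i$ of size $o(n)$ per basis as an absorbing structure, engineered so that any prescribed small ``leftover'' of unmatched vectors can be folded into the absorbers to complete the partition. Next, on the non-absorber vectors, iteratively build $n-o(n)$ disjoint transversal bases by a nibble-type procedure: repeatedly select a random-like transversal basis among the remaining vectors and remove its vertices. The quasirandomness of $H$ ensures that the nibble leaves only $o(n)$ unmatched vectors per row. Finally, the absorbers soak up this leftover to produce the required $n$ disjoint transversal bases.

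The main obstacle I expect is the construction of the absorber. Unlike graph matchings, where augmenting paths are local, modifying a transversal basis requires swapping vectors subject to the \emph{global} constraint of linear independence. The absorber must therefore be designed so that, for every feasible leftover, some rearrangement of $A_1,\dots,A_n$ (together with the leftover) yields valid transversal bases. Carrying this out will require a careful randomized construction of the absorbers combined with fine control over the distribution of small sub-bases of a random basis of $\F_q^n$—using, for instance, sharp estimates on quantities such as $|B\cap V|$ for a random basis $B$ and a fixed subspace $V\subsetneq\F_q^n$, and on the distribution of the span of a random small subset of $B$.
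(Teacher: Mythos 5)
There is a genuine gap, and it sits exactly where you flag it. Your plan reduces the problem to a perfect matching in an $n$-uniform hypergraph on $n^2$ vertices and then invokes Pippenger--Spencer/R\"odl-nibble machinery plus absorption. That machinery is designed for hypergraphs of \emph{fixed} (or very slowly growing) uniformity on a growing vertex set; here the uniformity is $n$, the perfect matching consists of only $n$ edges, and each chosen edge deletes a $1/n$-fraction of all vertices, so there is no semi-random regime and no off-the-shelf nibble or codegree-based matching theorem applies. Even the ``quasirandomness gives $n-o(n)$ disjoint transversal bases'' step would have to be a bespoke argument (this is essentially the content of Pokrovskiy's asymptotic result, which is itself a substantial piece of work). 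More seriously, the absorber is never constructed: you correctly identify that modifying a transversal basis is a global linear-independence constraint, but you give no mechanism by which the reserved sets $A_1,\dots,A_n$ could ``soak up'' an arbitrary $o(n)$-per-row leftover. Upgrading $n-o(n)$ transversal bases to $n$ is precisely the open core of Rota's conjecture, so deferring the absorber is deferring the entire proof; the randomness of the $B_i$ does not obviously help, since after the nibble the leftover vectors are heavily conditioned.

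For comparison, the paper avoids hypergraph matching and absorption altogether. It orders each basis, splits the array into a top half ($i\leq n'$) and a bottom half ($i>n'$), and for each column $j$ forms $X_j=\{b_{1,j},\dots,b_{n',j}\}$ and $Y_j=\{b_{n'+1,j},\dots,b_{n,j}\}$. Every union $X_h\cup Y_j$ is automatically transversal, so it suffices to find a perfect matching in the ordinary bipartite graph whose edges are the pairs $(h,j)$ with $X_h\cup Y_j$ linearly independent; this is done by verifying Hall's condition with high probability, using the observation that when the vectors of a random basis are exposed one at a time, each new vector lies in any fixed proper subspace with probability bounded away from $1$ (quantified via the dispersedness of the ambient set). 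If you want to salvage your approach, the first step would be to replace the $n$-uniform hypergraph by some such polynomially-sized auxiliary structure in which a perfect matching certifies the decomposition; as written, the proposal does not constitute a proof.
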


We remark that in fact, the $o(1)$-term here can be taken independently of $q$ (so Theorem \ref{thm-1} also holds if $q$ is allowed to depend on $n$). Note that Theorem \ref{thm-1} can be interpreted as saying that Rota's basis conjecture holds for ``almost all'' choices of bases $B_1,\dots,B_n\su \F_q^n$.

For various problems concerning vectors in $\mathbb{R}^n$, or more generally in $\F^n$, it is of interest to restrict one's attention to vectors with entries in some particular fixed set, most notably $\{0,1\}$-vectors. Our next result is a version of Theorem \ref{thm-1}, where we only consider bases of $\F^n$ consisting of vectors with entries from some specified set $S$. In other words, we prove that Rota's basis conjecture holds for random bases of vectors with entries from some specified set (e.g.\ $\{0,1\}$-vectors).

\begin{theorem}\label{thm-2}
Fix a field $\F$ and a finite subset $S\su \F$ of size $|S|\geq 2$. Let $B_1,\dots,B_n\su S^n$ be independent random bases of the vector space $\F^n$, where each $B_i$ is chosen uniformly at random from the collection $\{B\su S^n\mid B\text{ is a basis of }\F^n\}$ of the bases of $\F^n$ with all vector entries in $S$.  Then, with probability $1-o(1)$ as $n\to \infty$, the multi-set $B_1\cup \dots\cup B_n$ can be partitioned into $n$ bases of $\F^n$, which are transversal with respect to the original bases $B_1,\dots,B_n$.
\end{theorem}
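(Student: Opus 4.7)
The plan is to follow the template of the proof of Theorem \ref{thm-1} and carry out an absorbing-method argument. The first step is to reduce to a simpler random model: the law of a uniformly random basis $B_i\su S^n$ can be compared with the law of $n$ iid uniform vectors in $S^n$ conditioned on linear independence, and the two laws are within a bounded ratio because $n$ iid uniform vectors in $S^n$ are linearly independent with probability bounded below by a positive constant. This in turn follows from the anti-concentration estimate that a uniform random vector in $S^n$ lies in any fixed proper subspace $V\su \F^n$ with probability at most $\rho^{\operatorname{codim} V}$, where $\rho=\rho(|S|)<1$ is obtained by fixing a coordinate on which the linear forms defining $V$ are non-constant. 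This reduces Theorem \ref{thm-2} to proving the conclusion (with probability $1-o(1)$) for $B_1,\dots,B_n$ consisting of $n^2$ iid uniform vectors in $S^n$ grouped into $n$ blocks of $n$.

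In this iid model, the naive ``random bijection'' approach --- sample bijections $\pi_i\colon[n]\to B_i$ and define $T_k=\{\pi_i^{-1}(k):i\in[n]\}$ --- provably fails: when $\F=\F_2$ and $S=\F_2$, the matrix formed by $T_k$ is uniform in $\F_2^{n\times n}$ and invertible only with probability $\approx 0.289$, so a union bound over $n$ transversal sets is hopeless. Instead, I would use an absorbing construction. Randomly reserve, from each $B_i$, a small ``absorber'' $A_i\su B_i$ of size $m=\Theta(\log n)$, and first assign the working vectors $W_i=B_i\sm A_i$ to $n$ partial transversal sets $P_1,\dots,P_n$, one vector from (most) $W_i$ per $P_k$, in such a way that the $P_k$ are all linearly independent. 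The absorbers $A_1,\dots,A_n$ are then used to complete each $P_k$ to a full basis $T_k$ of $\F^n$, contributing one absorber from each $A_i$ to each $P_k$, via a bipartite matching.

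The critical step is showing that the completion matching exists with high probability. This amounts to a Hall-type condition between absorbers and completion slots, where a pair (absorber $v\in A_i$, slot of $P_k$) is compatible exactly when inserting $v$ into $P_k$ keeps the partial basis linearly independent. Verifying Hall's condition requires bounding, for every potentially deficient slot set $T$, the number of absorbers that lie in the specific low-dimensional subspace cut out by the corresponding partial bases --- and this is where the anti-concentration estimate is used again, now in a stronger quantitative form that must survive a union bound over subsets $T$. Carefully balancing the absorber size $m$, the efficiency of the working phase, and the quality of the anti-concentration bound is the main technical challenge and is the step that genuinely distinguishes Theorem \ref{thm-2} from Theorem \ref{thm-1}, where the uniform $1/q^k$ bound on lying in a $k$-codimensional subspace provides much simpler control.
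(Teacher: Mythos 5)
Your proposal has two genuine gaps, the first of which is fatal as stated. The reduction to the iid model does not work: the Radon--Nikodym ratio between a uniformly random basis $B_i\su S^n$ and $n$ iid uniform vectors of $S^n$ is indeed bounded (by $1/\Pr[\text{LI}]$, a constant), but Theorem \ref{thm-2} involves $n$ \emph{independent} such bases, so the joint conditioning event is ``all $n$ blocks are linearly independent,'' which has probability $p^n$ with $p<1$ a constant. Transferring a $1-o(1)$ statement from the iid model to the conditioned model therefore requires failure probability $o(p^n)$, i.e.\ exponentially small with a specific rate, in the iid model --- not the $o(1)$ you propose to prove, and not something the Chernoff-type estimates in an absorption argument would automatically deliver (e.g.\ for $|S|=2$ one has $p\approx 0.29$, so $p^n\approx e^{-1.24n}$, while the concentration bounds available here decay like $e^{-\delta^2 n}$ for a very small $\delta$). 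The paper avoids this entirely by working directly in the conditioned model: it shows (Lemma \ref{lemma-probabilities-differ-factor-c-prime}) that the conditional distribution of each \emph{single} new basis vector, given the previously exposed vectors of the same basis, is within a multiplicative factor $c'$ of uniform on $T\sm\spn(\text{previous})$, and then runs the whole argument with these conditional one-step bounds (Lemma \ref{lemma-new-vector-in-subspace}). That per-vector comparison is the correct way to exploit your bounded-ratio observation; a global change of measure is not.

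The second gap is the working phase of the absorption. Producing $n$ pairwise disjoint, individually linearly independent partial transversals $P_1,\dots,P_n$ each of size $n-\Theta(\log n)$ is essentially the hard core of the problem (it is far stronger than Pokrovskiy's $n-o(n)$ result, and you give no mechanism for it; a greedy or random assignment fails for exactly the reason you yourself note in the $\F_2$ example, since near the end of building each $P_k$ a constant fraction of candidates lie in its span). There is also an accounting/structural issue in the completion step: each $A_i$ has only $m=\Theta(\log n)$ vectors, so it cannot contribute ``one absorber to each $P_k$,'' and inserting several absorbers into the same $P_k$ is a joint independence condition, not a pairwise compatibility, so the completion is not a plain bipartite matching. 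The paper's route is different and sidesteps absorption altogether: it splits each basis into a front half and a back half by position, forms $X_h=\{b_{1,h},\dots,b_{n',h}\}$ and $Y_j=\{b_{n'+1,j},\dots,b_{n,j}\}$, and finds a perfect matching between the $X_h$ and the $Y_j$ in the graph where $h\sim j$ iff $X_h\cup Y_j$ is independent, verifying Hall's condition via degree and expansion estimates; restricting to the first $\lceil n/2\rceil$ positions is what keeps the conditional spans small enough for the dispersedness bound to bite.
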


We remark that the $o(1)$-term here can be taken independently of $\F$ and $S$.

Note that Theorem  \ref{thm-1} can be viewed as a special case of Theorem \ref{thm-2} by taking $\F=S=\F_q$. The more general setting of Theorem \ref{thm-2} introduces various challenges caused by the lack of symmetry between different bases $B\su S^n$ of $\F^n$. Indeed, in the setting of Theorem  \ref{thm-1}, any two bases $B\su \F_q^n$ can be transformed into each other by an isomorphism of $\F^n$. However, such an isomorphism does not necessarily preserve the set $S^n$ in the setting of Theorem \ref{thm-2}. Here is another example illustrating this point: In $\F_q^n$, each non-zero vector is contained in the same number of bases $B$ of $\F_q^n$. However, taking $S=\{0,1,2\}$ and a prime $q\geq 5$, not every non-zero vector in $\{0,1,2\}^n$ is contained in the same number of bases $B\su \{0,1,2\}^n$ of $\F_q^n$ (if $n\geq 2$).

One may also be interested in special sets of ``allowed vectors'' other than sets of the form $S^n$ as in Theorem \ref{thm-2}. For example, graphic matroids naturally corresponds to the setting of considering vectors in $\{1,0,-1\}^n$ consisting of precisely one $1$-entry and one $(-1)$-entry (with the remaining $n-2$ entries being zero). So it is natural to study the setting where for some subset $T\su \F^n$ of ``allowed vectors'' one considers independent uniformly random bases $B_1,\dots,B_n\su T$ of $\F^n$ (i.e. independent random bases, each chosen uniformly at random among all bases of $\F^n$ that are subsets of $T$). Taking $T=S^n$ gives precisely the setting of Theorem \ref{thm-2}. Our arguments to prove Theorem \ref{thm-2} generalize to sets $T\su \F^n$ that are reasonably well spread out over $\F^n$, in the sense of being not too ``clumped'' on any subspace of $\F^n$. The following definition makes this condition precise.

\begin{definition}\label{defi-dispersed}
For $0<c<1$, we say that a set $T\su \F^n$ of vectors in the vector space $\F^n$ over some field $\F$ is \emph{$c$-dispersed} if every (linear) subspace $V\su \F^n$ satisfies $|V\cap T|\leq c^{n-\dim V}\cdot |T|$.
\end{definition}

In other words, a set $T\su \F^n$  is $c$-dispersed if every subspace $V\su \F^n$ contains at most a  $(c^{n-\dim V})$-fraction of the vectors in $T$. Note that every $c$-dispersed set $T\su \F^n$ is also $c'$-dispersed if $0<c<c'<1$. Furthermore, we have $\spn(T)=\F^n$ for every $c$-dispersed set $T\su \F^n$ (for any $0<c<1$).

Our arguments for proving Theorem \ref{thm-2} give an analogous result with $S^n$ replaced by a $c$-dispersed set $T\su \F^n$ for some fixed $0<c<1$. This is stated in the following theorem.

\begin{theorem}\label{thm-3}
Fix a field $\F$ and $0<c<1$. Let $T\su \F^n$ be $c$-dispersed and let $B_1,\dots,B_n\su T$ be independent random bases of the vector space $\F^n$, where each $B_i$ is chosen uniformly at random from the collection $\{B\su T\mid B\text{ is a basis of }\F^n\}$.  Then, with probability $1-o(1)$ as $n\to \infty$, the multi-set $B_1\cup \dots\cup B_n$ can be partitioned into $n$ bases of $\F^n$, which are transversal with respect to the original bases $B_1,\dots,B_n$.
\end{theorem}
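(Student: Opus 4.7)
The plan is an absorbing strategy: keep a small random reserve of bases aside, use the remaining bases to build a nearly complete transversal decomposition, and then use the reserve to patch up the leftover. The $c$-dispersion hypothesis is used throughout to ensure that random bases in $T$ are spread out enough over $\F^n$ so that all the required matchings exist with high probability.

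Stage 1 (Reserve and universality). Fix a small constant $\eta = \eta(c) > 0$ and set aside a reserve of $\eta n$ bases. I would first prove a \emph{universality lemma}: with probability $1-o(1)$, for every choice of subspaces $V_1, \dots, V_n \su \F^n$ of dimension at most $n - o(n)$ and every assignment of "deficient slots" to transversal classes, the reserve admits a rainbow selection (one vector from each reserve basis) that extends each $V_j$ out to a basis in the prescribed pattern. The key ingredients are (a) a bound $\Pr[v \in V_j] \leq c^{n - \dim V_j} \cdot (1+o(1))$ for any designated vector position in a uniformly random basis contained in $T$, derived from the $c$-dispersion condition together with a standard "reveal the last coordinate" argument; and (b) a union bound over subspace configurations, parameterized by dimension, in which the number of relevant subspaces is compensated by the dispersion factor $c^{n - \dim V_j}$.

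Stage 2 (Approximate decomposition). On the remaining $(1-\eta)n$ main bases, I would build $n$ partial transversal linearly independent sets $I_1, \dots, I_n$ by a nibble-type procedure: in each round, assign one vector from each not-yet-processed basis to one of the $I_j$'s via a bipartite matching, using augmenting paths in the matroid exchange graph to maintain independence. The $c$-dispersion condition ensures that at each step the vector/slot bipartite graph satisfies Hall's condition with high probability, since the forbidden set for a slot is a low-dimensional subspace of $\F^n$ containing only a small fraction of $T$. By an argument in the spirit of \cite{prokrovskiy} and \cite{bucic-et-al}, one can arrange for only $o(n)$ slots per class to remain unfilled and for each partial span $\spn(I_j)$ to have codimension $o(n)$.

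Stage 3 (Absorb) and main obstacle. The leftover configuration from Stage 2 specifies, for each $j$, a subspace $V_j = \spn(I_j)$ of codimension $o(n)$ together with a set of bases (both reserve and residual main-batch) still owing a vector to $I_j$. The extension of all $I_j$ to full transversal bases reduces to a single rainbow perfect matching problem, which the universality lemma from Stage 1 handles. The hard part is the universality lemma itself: the reserve must simultaneously absorb every defect pattern Stage 2 could produce, so a naive union bound fails and one must parametrize defects by subspaces and exploit the $c$-dispersion bound as the quantitative workhorse. A further technical subtlety is that the $n$ vectors within a single uniformly random basis of $T$ are not independent, so decoupling their contributions will require a switching or sequential conditioning argument — this is where I expect most of the technical work to lie.
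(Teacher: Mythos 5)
Your proposal follows a reserve-plus-absorption template, which is structurally quite different from the paper's argument (the paper splits each basis into its first $n'$ and last $n-n'$ vectors, forms the sets $X_h=\{b_{1,h},\dots,b_{n',h}\}$ and $Y_j=\{b_{n'+1,j},\dots,b_{n,j}\}$, and finds a perfect matching, via Hall's condition, in the bipartite graph whose edges are the pairs $(h,j)$ with $X_h\cup Y_j$ linearly independent). The issue is not the difference in route but that the two load-bearing steps of your plan are left unproved, and the first one, as you describe it, cannot be proved the way you suggest. Your universality lemma quantifies over \emph{every} choice of subspaces $V_1,\dots,V_n$ of dimension $n-o(n)$ and is to be established by ``a union bound over subspace configurations \ldots compensated by the dispersion factor $c^{n-\dim V_j}$.'' This fails: Theorem \ref{thm-3} allows $\F$ to be infinite (only $T$ is finite), so there are infinitely many subspaces of each dimension; and even over $\F_q$ the number of codimension-$d$ subspaces is roughly $q^{dn}$, while the failure probability you can extract from an $\eta n$-basis reserve for one fixed subspace is only about $c^{d\eta n}$, which does not beat $q^{dn}$ once $q$ is large relative to the fixed $c$. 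Any workable version must restrict attention to the (polynomially many, or at least countably structured) subspaces actually spanned by revealed random vectors, and tracking those is where the real work lies --- this is exactly the role of the ``bad pair'' bookkeeping in the paper's Lemma \ref{lem-no-bad-set}, and it reshapes the whole argument rather than being a patch.

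The second gap is that Stage 3 is essentially the original problem in disguise. Since the reserve bases contribute nothing in Stage 2, every partial transversal $I_j$ is missing one vector from \emph{each} of the $\eta n$ reserve bases (so $\spn(I_j)$ has codimension at least $\eta n$, not $o(n)$), and completing all $n$ classes simultaneously means decomposing the multiset union of the reserve bases into $n$ disjoint rainbow independent sets, each extending a prescribed subspace to a basis. That is a Rota-type decomposition of the same character and difficulty as the theorem itself, deferred to the unproved universality lemma. Relatedly, you correctly identify that the $n$ vectors of a single uniformly random basis in $T$ are not independent and that some sequential conditioning is needed, but you do not supply it; the paper's resolution (Lemmas \ref{lemma-probabilities-differ-factor-c-prime} and \ref{lemma-new-vector-in-subspace}: conditional on any prefix $b_{i,1},\dots,b_{i,j-1}$, the probabilities of any two admissible candidates for $b_{i,j}$ differ by a factor at most $1/c'$, whence $\Pr[b_{i,j}\in V]\leq\alpha_k$ for codimension-$k$ subspaces $V$) is the quantitative engine without which neither your Stage 1 nor Stage 2 estimates can be carried out. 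As it stands the proposal is a plausible outline of a harder proof, not a proof.
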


The $o(1)$-term here can actually be taken independently of $\F$ (but it is required for $c$ to be fixed).

\textit{Organization.} This paper is organized as follows. In Section \ref{sect-aux}, we deduce Theorems \ref{thm-1} and \ref{thm-2} from Theorem \ref{thm-3}, and state some auxiliary lemmas for the proof of Theorem \ref{thm-3}. Section \ref{sect-main-proof} explains the approach for proving Theorem \ref{thm-3}, but the main part of the proof is (encapsulated in a certain proposition) postponed to Section \ref{sect-proof-propo}. Section \ref{sect-prob-dist} contains some preparations for the proof in Section \ref{sect-proof-propo}, while Section \ref{sect-lemmas} contains the proofs of some lemmas in Section \ref{sect-proof-propo}. At the end of the paper, we make some concluding remarks in Section~\ref{sect-concluding-remarks}.

\textit{Acknowledgements.} The author is grateful for helpful discussions with Asaf Ferber and Matthew Kwan.

\section{Auxiliary lemmas and preparations}
\label{sect-aux}

\begin{lemma}\label{lemma-dispersedness-S-n}
For any field $\F$ and any subset $S\su \F$ of size $|S|\geq 2$, the set $S^n\su \F^n$ is $(1/|S|)$-dispersed.
\end{lemma}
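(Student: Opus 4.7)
The plan is to reduce the dispersedness inequality to a statement about coordinate projections. Rewriting the desired inequality, since $|S^n|=|S|^n$ and $(1/|S|)^{n-\dim V}\cdot |S|^n = |S|^{\dim V}$, the claim is equivalent to showing that every linear subspace $V\su \F^n$ of dimension $k$ satisfies $|V\cap S^n|\leq |S|^k$.

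The key observation is that a $k$-dimensional subspace $V\su \F^n$ must project injectively onto some set of $k$ coordinates. Indeed, the coordinate functionals $\pi_1,\dots,\pi_n\in(\F^n)^*$ span $(\F^n)^*$, so their restrictions $\pi_1|_V,\dots,\pi_n|_V$ span $V^*$. Hence one can select indices $1\leq i_1<\dots<i_k\leq n$ such that $\pi_{i_1}|_V,\dots,\pi_{i_k}|_V$ form a basis of $V^*$, and consequently the coordinate projection $\pi\colon V\to \F^k$ onto the coordinates $i_1,\dots,i_k$ is a (linear) isomorphism. Equivalently, one can pick a basis of $V$ arranged as a $k\times n$ matrix and choose any set of $k$ pivot columns.

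Given such a projection, every vector $v\in V\cap S^n$ satisfies $\pi(v)\in S^k$, and since $\pi$ is injective this yields
\[
|V\cap S^n|\;\leq\;|S^k|\;=\;|S|^k\;=\;(1/|S|)^{n-\dim V}\cdot |S^n|,
\]
which is exactly the $(1/|S|)$-dispersedness condition.

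I do not anticipate any real obstacle; the argument is essentially a one-line projection bound. The only thing to be mindful of is handling the degenerate cases ($V=\{0\}$ uses $|S|^0=1$ and $V=\F^n$ gives equality), both of which are immediate from the same projection argument.
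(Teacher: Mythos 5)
Your proof is correct and is essentially the same as the paper's: the paper writes the defining system of $V$ in row echelon form and observes that the $k$ free variables determine the rest, which is precisely your statement that $V$ projects injectively onto some $k$ coordinates. Both give $|V\cap S^n|\leq|S|^k$ immediately.
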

\begin{proof}
Let $V\su \F^n$ be a subspace of dimension $k$. We need to show that $|V\cap S^n|\leq (1/|S|)^{n-k}\cdot |S^n|$, i.e.\ that $|V\cap S^n|\leq |S|^k$. The $k$-dimensional subspace $V\su \F^n$ is given by a system of $n-k$ linearly independent linear equations. When writing this system in row echelon form, we obtain $k$ free variables (whereas the other $n-k$ variables are determined by linear expressions in the $k$ free variables). For a vector in $V\cap S^n$, all free variables need to take values in $S$, so there are at most $|S|^k$ choices for the free variables (which then, in turn, determine the remaining variables). Thus, $|V\cap S^n|\leq |S|^k$ as desired. 
\end{proof}

Using Lemma \ref{lemma-dispersedness-S-n}, we can easily deduce Theorem \ref{thm-2} from Theorem \ref{thm-3}.

\begin{proof}[Proof of Theorem \ref{thm-2} assuming Theorem \ref{thm-3}]
Let $S\su \F$ be a subset of size $|S|\geq 2$. By Lemma \ref{lemma-dispersedness-S-n}, the set $T=S^n\su \F^n$ is $(1/|S|)$-dispersed, and in particular $(1/2)$-dispersed. We can now apply Theorem \ref{thm-3} with $c=1/2$ to obtain the desired conclusion.
\end{proof}

Recalling that Theorem \ref{thm-2} immediately implies Theorem \ref{thm-1} as a special case (by taking $\F=S=\F_q$), it only remains to prove Theorem \ref{thm-3}. This will be our goal for the rest of this paper.

So let us from now on fix a field $\F$ and $0<c<1$. All of our asymptotic $o$-notation in the rest of this paper may depend on the fixed value of $c$.

We define
\begin{equation}\label{eq-defi-c-prime}
c'=(1-c)\cdot (1-c^2)\cdot (1-c^3)\dotsm = \prod_{i=1}^{\infty}(1-c^i).
\end{equation}
It is clear that this infinite product is well-defined, since the sequence of partial products $\prod_{i=1}^{m}(1-c^i)$ for $m\to \infty$ is a monotone decreasing sequence of non-negative numbers and must therefore be convergent. Note that $c'$ only depends on our fixed value of $c$.

\begin{lemma} We have $0<c'<1$.
\end{lemma}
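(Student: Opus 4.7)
The plan is to establish the two inequalities separately, the upper bound being immediate and the lower bound requiring a quick estimate on the logarithm of the infinite product.

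For the upper bound, since $0 < c < 1$ we have $0 < 1-c < 1$, and for every $i\geq 1$ the factor $1-c^i$ lies in $(0,1)$. In particular $c' \leq 1-c < 1$. (More strongly, every partial product $\prod_{i=1}^m (1-c^i)$ is strictly less than $1-c$, and the infinite product inherits this bound.)

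For the lower bound, I would take logarithms and show that $-\log c' = \sum_{i=1}^\infty -\log(1-c^i)$ is finite. The standard inequality $-\log(1-x) \leq x/(1-x)$ for $0\leq x<1$ (which follows immediately from the power series expansion $-\log(1-x) = \sum_{k\geq 1} x^k/k \leq \sum_{k\geq 1} x^k = x/(1-x)$) gives, for each $i\geq 1$,
\[
-\log(1-c^i) \;\leq\; \frac{c^i}{1-c^i} \;\leq\; \frac{c^i}{1-c},
\]
where the second inequality uses $c^i\leq c$, hence $1-c^i \geq 1-c$. Summing the resulting geometric series yields
\[
-\log c' \;\leq\; \frac{1}{1-c}\sum_{i=1}^\infty c^i \;=\; \frac{c}{(1-c)^2},
\]
which is finite. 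Exponentiating gives $c' \geq \exp\bigl(-c/(1-c)^2\bigr) > 0$.

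There is no real obstacle here: the product converges absolutely in the standard sense because $\sum c^i < \infty$, and the bound $-\log(1-x)\leq x/(1-x)$ converts this into convergence of the logarithmic series. The only mild care is in handling the factor $1/(1-c^i)$ uniformly in $i$, which we dispose of by replacing it with the weaker but uniform bound $1/(1-c)$.
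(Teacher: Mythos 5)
Your proof is correct, and the lower bound is handled by a genuinely different route from the paper. The paper keeps everything at the level of products: it chooses $\ell$ with $c^\ell<1-c$, splits off the finite head $\prod_{i=1}^{\ell-1}(1-c^i)$, and bounds the tail by the Weierstrass-type inequality $\prod_{i\geq\ell}(1-c^i)\geq 1-\sum_{i\geq\ell}c^i=1-c^\ell/(1-c)>0$, which is why the threshold $\ell$ is needed at all. You instead pass to logarithms and use $-\log(1-x)\leq x/(1-x)$ together with the uniform bound $1-c^i\geq 1-c$, which lets you sum over \emph{all} $i\geq 1$ at once with no case split, and yields the explicit quantitative bound $c'\geq\exp\bigl(-c/(1-c)^2\bigr)$. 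The paper's argument is slightly more elementary (no logarithm or exponential needed, only the finite-sum inequality), while yours is shorter to state and gives a clean closed-form lower bound; the one small point worth making explicit in your write-up is that the identity $-\log c'=\sum_{i\geq 1}-\log(1-c^i)$ is justified because the partial sums of the (negative) logarithms are monotone and, by your estimate, bounded, so they converge and continuity of $\exp$ transfers the limit to the partial products. Both arguments are complete and correct.
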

\begin{proof}
Since $c>0$, it is easy to see that $c'<1$. In order to show $c'>0$, choose a sufficiently large positive integer $\ell$ such that $c^\ell<1-c$. Then we have
\[c'=\prod_{i=1}^{\ell-1}(1-c^i)\cdot \prod_{i=\ell}^{\infty}(1-c^i)\geq \prod_{i=1}^{\ell-1}(1-c^i)\cdot (1-c^\ell-c^{\ell+1}-c^{\ell+2}-\dots)=\prod_{i=1}^{\ell-1}(1-c^i)\cdot \left(1-\frac{c^\ell}{1-c}\right)>0,\]
noting that the last term is a finite product of positive factors.
\end{proof}

Finally, we will use the following auxiliary lemma in the proof of Theorem \ref{thm-3}. We postpone the proof of this lemma to Section \ref{sect-lemmas} (the proof is not difficult, but involves a somewhat lengthy computation).

\begin{lemma}\label{lem-auxiliary}
Let $Z_1,\dots,Z_K$ be a sequence of random variables. For some finite set $H$, consider events $\mathcal{E}_h^{(\ell)}$ for $h\in H$ and $\ell=1,\dots,K$, where each event $\mathcal{E}_h^{(\ell)}$ depends only on the outcomes of $Z_1,\dots, Z_\ell$. Suppose that $\beta_1,\dots,\beta_K$ with $0\leq \beta_\ell\leq 1$ for $\ell=1,\dots,K$ are chosen such that the following condition holds: For any subset $H'\su H$, any $\ell=1,\dots,K$ and any outcomes of $Z_1,\dots, Z_{\ell-1}$, we have (subject to the randomness of $Z_\ell$ when conditioning on the given outcomes of $Z_1,\dots, Z_{\ell-1}$) that
\[\Pr\left[\mathcal{E}_h^{(\ell)}\textnormal{ holds for all }h\in H'\,\Big\vert \,Z_1,\dots,Z_{\ell-1}\right]\leq (\beta_\ell)^{|H'|}.\]
Then we can conclude that
\[\Pr\left[\textnormal{for each }h\in H\textnormal{ there is }\ell\in \{1,\dots,K\}\textnormal{ such that }\mathcal{E}_h^{(\ell)}\textnormal{ holds}\right]\leq \left(1-(1-\beta_1)\dotsm (1-\beta_K)\right)^{|H|}.\]
\end{lemma}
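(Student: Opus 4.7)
The plan is to prove, by reverse induction on $\ell\in\{K+1,K,\dots,1\}$, the following stronger statement. Setting $q_\ell:=1-\prod_{i=\ell}^{K}(1-\beta_i)$, so that $q_{K+1}=0$ and $q_1$ is exactly the quantity whose $|H|$-th power appears in the conclusion, I claim that for every subset $S\subseteq H$ and every outcome of $Z_1,\dots,Z_{\ell-1}$,
\[\Pr\!\left[\text{for every }h\in S\text{ there is }\ell'\in\{\ell,\dots,K\}\text{ such that }\mathcal{E}_h^{(\ell')}\text{ holds}\,\Big|\,Z_1,\dots,Z_{\ell-1}\right]\leq q_\ell^{|S|}.\]
Specialising to $\ell=1$ and $S=H$ will then yield the lemma. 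The base case $\ell=K+1$ is immediate, since the index set $\{K+1,\dots,K\}$ is empty, so the event fails whenever $S\neq\emptyset$ and both sides equal $1$ when $S=\emptyset$ (using the convention $q_{K+1}^{0}=1$).

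For the inductive step from $\ell+1$ down to $\ell$, I would condition on $Z_1,\dots,Z_\ell$ and let $T\subseteq S$ denote the random subset of $h\in S$ for which $\mathcal{E}_h^{(\ell)}$ holds. The $h\in T$ are already ``caught'' at step $\ell$, so the event in question reduces to the requirement that every $h\in S\setminus T$ is caught in some $\ell'\in\{\ell+1,\dots,K\}$; the inductive hypothesis then bounds the corresponding conditional probability (given $Z_1,\dots,Z_\ell$) by $q_{\ell+1}^{|S\setminus T|}$. The crucial algebraic observation is that, since $\mathbb{1}[\mathcal{E}_h^{(\ell)}]\in\{0,1\}$, one has
\[q_{\ell+1}^{|S\setminus T|}=\prod_{h\in S}q_{\ell+1}^{\,1-\mathbb{1}[\mathcal{E}_h^{(\ell)}]}=\prod_{h\in S}\bigl(q_{\ell+1}+(1-q_{\ell+1})\,\mathbb{1}[\mathcal{E}_h^{(\ell)}]\bigr).\]

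Expanding this product into a sum over subsets $T'\subseteq S$, taking conditional expectation over $Z_\ell$ (given $Z_1,\dots,Z_{\ell-1}$), and applying the lemma's hypothesis $\Pr[\mathcal{E}_h^{(\ell)}\text{ for all }h\in T'\mid Z_1,\dots,Z_{\ell-1}]\leq\beta_\ell^{|T'|}$ to each term produces a binomial-style sum that collapses to $\bigl(q_{\ell+1}+(1-q_{\ell+1})\beta_\ell\bigr)^{|S|}$. A direct manipulation of the definitions gives $q_{\ell+1}+(1-q_{\ell+1})\beta_\ell=1-(1-\beta_\ell)(1-q_{\ell+1})=q_\ell$, which closes the induction.

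The main obstacle will be identifying the correct inductive quantity. A naive union bound $\mathbb{1}[A\cup B]\leq\mathbb{1}[A]+\mathbb{1}[B]$ would yield only the per-step factor $\beta_\ell+q_{\ell+1}$ rather than $\beta_\ell+(1-\beta_\ell)q_{\ell+1}$, thereby losing the cross-term $-\beta_\ell q_{\ell+1}$ that is essential for matching the target product $1-\prod(1-\beta_i)$. The identity $x^{1-p}=x+(1-x)p$ for $p\in\{0,1\}$, combined with the fact that the hypothesis controls joint probabilities of $\mathcal{E}_h^{(\ell)}$ over arbitrary subsets of indices at each fixed step $\ell$, is precisely what restores the correct multiplicative structure and makes the induction close.
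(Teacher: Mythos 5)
Your proof is correct and is essentially the paper's argument run in the opposite direction: the paper inducts forward on $K$, peeling off the last step $Z_K$ and expanding $(\beta_K)^{|H\sm J|}$ as a binomial sum over subsets, while you induct backward on $\ell$, peeling off the first remaining step and expanding $q_{\ell+1}^{|S\sm T|}$ via the same subset-sum identity; the collapse to $\bigl(q_{\ell+1}+(1-q_{\ell+1})\beta_\ell\bigr)^{|S|}$ mirrors the paper's collapse to $\bigl((1-(1-\beta_1)\dotsm(1-\beta_{K-1}))(1-\beta_K)+\beta_K\bigr)^{|H|}$ exactly. The only point worth making explicit when writing it up is that the coefficients $q_{\ell+1}^{|S\sm T'|}(1-q_{\ell+1})^{|T'|}$ are nonnegative (since $0\leq q_{\ell+1}\leq 1$), which is what licenses applying the hypothesis term by term.
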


\section{Proof strategy for Theorem \ref{thm-3}}
\label{sect-main-proof}

Recall that in the previous section we fixed a field $\F$ and $0<c<1$. As in Theorem \ref{thm-3}, let $T\su \F^n$ be a $c$-dispersed subset.

We now need to consider independent random bases $B_1,\dots,B_n\su T$ be of the vector space $\F^n$, where each $B_i$ is chosen independently uniformly at random among all bases of $\F^n$ that are subsets of $T$. For each basis $B_1,\dots,B_n$, we can imagine that it is equipped with an ordering of its $n$ vectors (take such an ordering uniformly randomly among all $n!$ orderings). In other words, we can write $B_i=\{b_{i,1},\dots,b_{i,n}\}$ for $i=1,\dots,n$ where $b_{i,1},\dots,b_{i,n}\in T$. Now, for each $i=1,\dots,n$ the $n$-tuple $(b_{i,1},\dots,b_{i,n})\in T^n$ is uniformly random among all $n$-tuples in $T^n$ that are bases of $\F^n$ (and these $n$-tuples are independent for different $i$).

In other words, let us from now on consider independent random $n$-tuples $(b_{i,1},\dots,b_{i,n})\in T^n$  for $i=1,\dots,n$, where each $(b_{i,1},\dots,b_{i,n})\in T^n$ is chosen uniformly random among all $n$-tuples in $T^n$ that are bases of $\F^n$, and let $B_i=\{b_{i,1},\dots,b_{i,n}\}$ for $i=1,\dots,n$.

In order to prove Theorem \ref{thm-3}, we need to prove that with probability $1-o(1)$, the multi-set $B_1\cup \dots\cup B_n$ can be partitioned into $n$ bases of $\F^n$, which are transversal with respect to the original bases $B_1,\dots,B_n$ (i.e. which are each of the form $\{b_{1,j_1},b_{2,j_2},\dots,b_{n,j_n}\}$ for some $j_1,\dots,j_n\in \{1,\dots,n\}$).

In order to construct the desired transversal bases (with high probability), let us consider multi-sets $X_j$ and $Y_j$ for $j=1,\dots,n$ defined as follows: First fix some choice of $n'\in \{\lfloor n/2\rfloor,\lceil n/2\rceil\}$ throughout the rest of the paper. Now, for every $j=1,\dots,n$, define $X_j=\{b_{1,j},b_{2,j},\dots,b_{n',j}\}$ and $Y_j=\{b_{n'+1,j},b_{n'+2,j},\dots,b_{n,j}\}$ (taken as a multi-set in case there are repetitions among the vectors $b_{1,j},b_{2,j},\dots,b_{n',j}$ and $b_{n'+1,j},b_{n'+2,j},\dots,b_{n,j}$, respectively). Note that $X_1,\dots,X_n$ form a partition of the multi-set $B_1\cup\dots\cup B_{n'}$, and $Y_1,\dots,Y_n$ form a partition of the multi-set $B_{n'+1}\cup\dots\cup B_{n}$. Thus, $X_1,\dots,X_n$ and $Y_1,\dots,Y_n$ together form a partition of $B_1\cup\dots\cup B_{n}$.

Note that for any $h,j\in \{1,\dots,n\}$, the multi-set $X_h\cup Y_{j}=\{b_{1,h},\dots,b_{n',h},b_{n'+1,j},\dots,b_{n,j}\}$ is transversal with respect to the bases $B_1,\dots,B_n$. Hence, if the $n$ vectors in $X_h\cup Y_{j}$ are linearly independent, then $X_h\cup Y_{j}$ is a transversal basis. Let us define a bipartite graph, with $n$ vertices on the left and $n$ vertices on the right, where for all $h,j\in \{1,\dots,n\}$ we draw an edge between vertex $j$ on the left and vertex $h$ on the right if and only if the $n$ vectors in $X_h\cup Y_{j}$ are linearly independent. Then each edge in this graph corresponds to a transversal basis of the form $X_h\cup Y_{j}$.

Hence, in order to find a partition of $B_1\cup \dots\cup B_n$ into $n$ transversal bases, it suffices to find a perfect matching in this bipartite graph. Indeed, each edge of such a perfect matching would give a transversal basis $X_h\cup Y_{j}$ and together they would form a partition of the multi-set $X_1\cup \dots\cup X_n\cup Y_1\cup \dots\cup Y_n=B_1\cup \dots\cup B_n$.

Thus, in order to prove Theorem \ref{thm-3}, it suffices to show that the bipartite graph defined above has a perfect matching with probability $1-o(1)$. This follows from the following two propositions, which assert that with probability $1-o(1)$ one can find perfect matchings separately for the ``top half'' and the ``bottom half'' of the graph.

\begin{proposition}\label{propo-1}
Let $m\in \{\lfloor n/2\rfloor,\lceil n/2\rceil\}$, and consider the induced subgraph of the bipartite graph defined above, where we only take the vertices $1,\dots,m$ on the left and the vertices $1,\dots,m$ on the right. Then this induced subgraph has a perfect matching with probability $1-o(1)$.
\end{proposition}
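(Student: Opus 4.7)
The plan is to apply Hall's theorem to the bipartite graph, reducing the problem to bounding the probability of a Hall obstruction via a union bound. Recall that the induced bipartite subgraph fails to have a perfect matching if and only if there exist subsets $A\su L$ and $C\su R$ with $|A|+|C|\geq m+1$ and no edges between $A$ and $C$ (i.e., such that for every $(j,h)\in A\times C$, the multi-set $X_h\cup Y_j$ is linearly dependent). I will show that, with probability $1-o(1)$, no such $(A,C)$ exists.

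The first step is to condition on $B_1,\dots,B_{n'}$, which determines $X_1,\dots,X_m$. Using the $c$-dispersedness of $T$ via a sequential exposure of the vectors $b_{1,h},\dots,b_{n',h}$, I would argue that with probability $1-o(1)$, each $X_h$ is linearly independent, spanning an $n'$-dimensional subspace $V_h\su \F^n$ (the probability that $b_{i,h}$ falls into the span of the previously exposed vectors decays geometrically in $n-i+1$ by the dispersedness input, and a union bound over $i$ and $h$ gives $o(1)$). From now on I work conditional on this ``nice'' event.

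The heart of the argument is, for each fixed $(A,C)$, to bound $\Pr[\text{no edges between }A\text{ and }C]$ by applying Lemma~\ref{lem-auxiliary}. The natural setup is to take $Z_\ell=B_{n'+\ell}$ for $\ell=1,\dots,n-n'$ (revealing the remaining bases one at a time), to let $H=A\times C$, and to let $\mathcal{E}_{(j,h)}^{(\ell)}$ be the event that the new vector $b_{n'+\ell,j}$ lies in $V_h+\spn(b_{n'+1,j},\dots,b_{n'+\ell-1,j})$. Observing that ``for each $(j,h)\in H$, some $\mathcal{E}_{(j,h)}^{(\ell)}$ holds'' is exactly equivalent to ``for every $(j,h)\in A\times C$, $X_h\cup Y_j$ is linearly dependent,'' the lemma would then yield an upper bound on the ``no edges'' probability. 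If $\beta_\ell$ can be taken to be approximately $c^{n-n'-\ell+1}$, then by the definition \eqref{eq-defi-c-prime} of $c'$ we have $\prod_\ell(1-\beta_\ell)\to c'$ as $n\to\infty$, and hence the lemma delivers $\Pr[\text{no edges between }A,C]\leq (1-c'+o(1))^{|A|\cdot|C|}$.

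I expect the main obstacle to be verifying the hypothesis $\Pr[\mathcal{E}_{(j,h)}^{(\ell)}\text{ for all }(j,h)\in H'\mid Z_1,\dots,Z_{\ell-1}]\leq \beta_\ell^{|H'|}$ of Lemma~\ref{lem-auxiliary}. The challenge is twofold: (a) events for different $j\in A$ (and the same $\ell$) concern different components $b_{n'+\ell,j}$ of the same random basis $B_{n'+\ell}$, which are correlated via the basis constraint; and (b) events for different $h\in C$ (and the same $(j,\ell)$) concern the same component $b_{n'+\ell,j}$ but require it to lie in different subspaces $V_h+\spn(\cdots)$. Controlling both requires a careful analysis of the joint distribution of columns of a uniformly random basis of $\F^n$ inside $T^n$, exploiting the $c$-dispersedness of $T$ in a more refined way than was used for Step~1; this is presumably what the dedicated probability-distribution section in the paper develops.

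Granted the bound $\Pr[\text{no edges between }A,C]\leq(1-c'+o(1))^{|A||C|}$, the proof concludes by union-bounding:
\[\sum_{\substack{a+c\geq m+1\\ 1\leq a,c\leq m}}\binom{m}{a}\binom{m}{c}(1-c'+o(1))^{a c}.\]
Since $a+c\geq m+1$ forces $ac\geq \min(a,c)\cdot(m+1-\min(a,c))$, splitting according to $s=\min(a,c)$ and using $\binom{m}{s}\leq(em/s)^s$ shows that, for each fixed $s\geq 1$, the contribution is at most $(em/s)^{2s}\cdot(1-c')^{s(m-s)}$, which is exponentially small in $m$ for any fixed $c'>0$. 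Summing over $s$ yields $o(1)$ and finishes the proof.
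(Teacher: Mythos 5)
There is a genuine gap, and it sits exactly where you flag it: the hypothesis of Lemma \ref{lem-auxiliary} with $H=A\times C$ cannot be verified by the tools the paper develops, and the claimed bound $\Pr[\text{no edges between }A,C]\leq(1-c'+o(1))^{|A||C|}$ is not established (nor does the paper prove anything of this strength). For pairs $(j,h)$ and $(j,h')$ sharing the same $j$, your events require the \emph{same} vector $b_{n'+\ell,j}$ to lie in the subspaces $\spn(X_h\cup\{b_{n'+1,j},\dots\})$ and $\spn(X_{h'}\cup\{b_{n'+1,j},\dots\})$, so a product bound $\beta_\ell^{|H'|}$ needs the intersection of these spans to have codimension roughly $|H'|$ times the individual codimension. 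That is false for atypical outcomes of $X_1,\dots,X_m$ (e.g.\ if several $X_h$ span nearly the same subspace, isolating a vertex $j\in A$ from all of $C$ costs only about $(1-3\delta)^{|A|}$, not $(1-3\delta)^{|A||C|}$), so such a bound can only hold after excluding ``clumped'' configurations. The paper does exactly this, but only for families of \emph{constant} size $L$ and constant codimension budget $K$ (Definition \ref{defi-bad}, Lemma \ref{lem-no-bad-set}), with the explicit ``bad pair'' escape clause in Lemma \ref{lem-prob-L-tuple}; the dimension-increment argument in the proof of Lemma \ref{lem-no-bad-set} needs $n'-K|H|=\Omega(n)$ and yields only polynomially small exceptional probabilities, so it cannot be pushed to the sets of size $\Theta(m)$ or $\Theta(m^2)$ that your union bound over all Hall obstructions $(A,C)$ with $|A|+|C|\geq m+1$ would require (there are exponentially many such pairs, and your large-$s$ regime needs super-exponentially small failure probabilities). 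This is why the paper takes a different route to Hall's condition: a minimum-degree bound $2\delta m$ on both sides (Lemmas \ref{lem-high-degrees-left}, \ref{lem-high-degrees-right}) plus the statement that every $L$-tuple of left vertices has joint neighborhood of size $(1-2\delta)m$ (Lemma \ref{lem-L-tuple-nbhd}), which together cover the three regimes of $|H|$ without ever needing exponentially small probabilities for exponentially many events.

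A secondary, quantitative point: even per pair, the correct constant is $3\delta=\prod_{k\geq 1}(1-\alpha_k)$ rather than $c'$. The conditional distribution of a new basis vector is uniform on $T$ minus the current span only up to a factor $c'$ (Lemma \ref{lemma-probabilities-differ-factor-c-prime}), so the single-step subspace probability is $\alpha_k\leq (2/c')c^k$, not $c^k$; your $\beta_\ell\approx c^{n-n'-\ell+1}$ and the identification of the product with $c'$ are therefore not justified. This alone would not break your union bound (any fixed positive constant would do), but the main obstacle above is essential: without a joint expansion/non-degeneracy statement for linearly many columns and spans — which neither your sketch nor the paper provides — the per-obstruction bound your argument rests on remains unproven.
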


\begin{proposition}\label{propo-2}
Let $m\in \{\lfloor n/2\rfloor,\lceil n/2\rceil\}$, and consider the induced subgraph of the bipartite graph defined above, where we only take the vertices $m+1,\dots,n$ on the left and the vertices $m+1,\dots,n$ on the right. Then this induced subgraph has a perfect matching with probability $1-o(1)$.
\end{proposition}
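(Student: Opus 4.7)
The plan is to deduce Proposition~\ref{propo-2} directly from Proposition~\ref{propo-1} by a distributional symmetry argument, exploiting the fact that the random configuration $(b_{i,j})_{i,j}$ is invariant in distribution under permutations of the column index~$j$.

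Recall from Section~\ref{sect-main-proof} that for each $i$, the ordered tuple $(b_{i,1},\dots,b_{i,n})\in T^n$ is uniformly distributed over all $n$-tuples in $T^n$ that are bases of $\F^n$, and these tuples are independent across~$i$. Hence for any permutation $\sigma$ of $\{1,\dots,n\}$, the relabeled family $(b_{i,\sigma(j)})_{i,j}$ has the same joint distribution as $(b_{i,j})_{i,j}$. Under this relabeling, $X_h$ and $Y_j$ become $X_{\sigma(h)}$ and $Y_{\sigma(j)}$ respectively, so the bipartite graph $G$ transforms into the graph obtained by applying $\sigma$ to the vertex labels on both the left and right sides. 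Choosing $\sigma$ to be any bijection satisfying $\sigma(\{1,\dots,n-m\})=\{m+1,\dots,n\}$, I would conclude that the induced subgraph of $G$ on vertex set $\{m+1,\dots,n\}$ (on both sides) has the same distribution as the induced subgraph of $G$ on vertex set $\{1,\dots,n-m\}$ (on both sides).

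Since $m\in\{\lfloor n/2\rfloor,\lceil n/2\rceil\}$ implies $n-m\in\{\lfloor n/2\rfloor,\lceil n/2\rceil\}$ as well, applying Proposition~\ref{propo-1} with $n-m$ in place of $m$ would then show that this latter subgraph has a perfect matching with probability $1-o(1)$, which is the desired conclusion. The reduction is entirely elementary, so I do not anticipate a substantive technical obstacle; the only care needed is in verifying that the edge relation ``$X_h\cup Y_j$ is a basis'' really does transform as claimed under relabeling by~$\sigma$, which follows because the edges of $G$ are defined intrinsically in terms of the $X$- and $Y$-sets. The genuine content thus lies entirely in Proposition~\ref{propo-1}, which is where any serious combinatorial or probabilistic difficulty is concentrated.
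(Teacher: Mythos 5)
Your argument is correct and is essentially the paper's own proof: the paper reduces Proposition~\ref{propo-2} to Proposition~\ref{propo-1} by reversing the order of the vectors within each basis, which is exactly your column-permutation symmetry with the specific choice $\sigma(j)=n+1-j$. No gap.
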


Note that the sets of vertices considered in the two propositions form a partition of the vertex set of the original bipartite graph. Hence, if we can find perfect matchings in the two induced subgraphs considered in these propositions, then we obtain a perfect matching in the original bipartite graph. Thus, Propositions \ref{propo-1} and \ref{propo-2} together imply Theorem \ref{thm-3}.

So it suffices to prove Propositions \ref{propo-1} and \ref{propo-2}. Note that these two propositions are equivalent to each other upon reversing the order of the vectors $b_{i,1},\dots,b_{i,n}$ in each basis $B_i$ (because that reverses the order of the list of sets $X_1,\dots,X_n$ and also of the list of sets $Y_1,\dots,Y_n$).

Thus, it actually suffices to just prove Proposition \ref{propo-1}, and the rest of this paper is devoted to proving this proposition.

The reason for considering the induced subgraph in Proposition \ref{propo-1} rather than the entire original bipartite graph is as follows. We will imagine that for each $i=1,\dots,n$ we expose the vectors $b_{i,1}, \dots,b_{i,m}$ one vector at a time in this order. The choices for the vectors $b_{i,1}, \dots,b_{i,m}$ are clearly not independent of each other (for example, the vectors $b_{i,1}, \dots,b_{i,m}$ must always be linearly independent for each $i=1,\dots,n$). However, the fact that $m$ is significantly smaller than $n$ makes the dependence of each of the vectors $b_{i,1}, \dots,b_{i,m}$ on the previously exposed vectors  easier to handle (in particular, because the span of the previously exposed vectors is not too large).

In the next section, we will analyze the probability distribution for each new vector when choosing the vectors $b_{i,1}, \dots,b_{i,m}$ one at a time for each $i=1,\dots,n$ (note that the choices for different $i$ are actually independent of each other since the bases $B_1,\dots,B_n$ are independent). The result of this section will then be used in our proof of Proposition \ref{propo-1} in Section \ref{sect-proof-propo}.

\section{Lemmas for the probability distribution for new basis vectors}
\label{sect-prob-dist}

Let us imagine that each random basis $B_i=\{b_{i,1},\dots,b_{i,n}\}\su T$ is exposed one vector at a time in this order. Then each vector $b_{i,j}$ for $1\leq j\leq n$ is some random vector in $T$ whose distribution depends on the previously exposed vectors $b_{i,1},\dots, b_{i,j-1}$ of the basis $B_i$ (recall that the different bases $B_1,\dots,B_n$ are independent). Given $b_{i,1},\dots, b_{i,j-1}$, the new vector $b_{i,j}\in T$ must be linearly independent from $b_{i,1},\dots, b_{i,j-1}$. Note, however, that the distribution of $b_{i,j}$ is not necessarily uniform among all vectors in $T\setminus \spn(b_{i,1},\dots, b_{i,j-1})$. Instead, the different possibilities of $b_{i,j}\in T\setminus \spn(b_{i,1},\dots, b_{i,j-1})$ have probabilities proportional to the number of possibilities for extending $b_{i,1},\dots, b_{i,j-1},b_{i,j}$ further to a basis of $\F^n$ with vectors in $T$. 

The next two lemmas show useful properties of this probability distribution. Recall that in (\ref{eq-defi-c-prime}) we defined $c'>0$ (depending only on $0<c<1$, which we fixed).

\begin{lemma}\label{lemma-probabilities-differ-factor-c-prime}
For some $i,j\in \{1,\dots, n\}$, assume that the vectors $b_{i,1},\dots, b_{i,j-1}\in T$ have already been exposed, and fix any outcomes of these vectors. Now, consider the probability distribution for $b_{i,j}$ in the set $T\setminus \spn(b_{i,1},\dots, b_{i,j-1})$. Then for any two vectors in $T\setminus \spn(b_{i,1},\dots, b_{i,j-1})$ the corresponding probabilities differ by a factor of at most $1/c'$.
\end{lemma}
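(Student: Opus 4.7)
The plan is to translate the ratio of probabilities into a ratio of counts of basis-extensions, and then to sandwich each such count between a trivial upper bound and a lower bound obtained by applying the $c$-dispersed hypothesis one vector at a time.

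First I would set $U=\spn(b_{i,1},\dots,b_{i,j-1})$, which is a $(j-1)$-dimensional subspace of $\F^n$, and for each candidate $v\in T\sm U$ let $U_v=\spn(U,v)$, a $j$-dimensional subspace. Since the unconditional distribution of $(b_{i,1},\dots,b_{i,n})$ is uniform on bases of $\F^n$ lying in $T^n$, the conditional probability that $b_{i,j}=v$ is proportional to the number of extensions $(w_{j+1},\dots,w_n)\in T^{n-j}$ for which $(b_{i,1},\dots,b_{i,j-1},v,w_{j+1},\dots,w_n)$ is a basis of $\F^n$. This count depends only on the subspace $U_v$: it is exactly the number
\[g(U_v)=\bigl|\{(w_1,\dots,w_{n-j})\in T^{n-j}:\ w_1,\dots,w_{n-j}\text{ are linearly independent modulo }U_v\}\bigr|.\]
Thus it suffices to show that for any two $j$-dimensional subspaces $W,W'\su \F^n$, we have $g(W)/g(W')\leq 1/c'$.

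Next I would prove the two-sided bound $c'\,|T|^{n-k}\leq g(W)\leq |T|^{n-k}$ for every $k$-dimensional subspace $W\su\F^n$. The upper bound is trivial since each $w_i\in T$. For the lower bound, I expose $w_1,\dots,w_{n-k}$ one at a time: after having chosen $w_1,\dots,w_{i-1}$ linearly independent modulo $W$, the forbidden vectors are precisely those in the $(k+i-1)$-dimensional subspace $\spn(W,w_1,\dots,w_{i-1})$, and by the $c$-dispersed hypothesis this intersects $T$ in at most $c^{n-k-i+1}\,|T|$ vectors. Hence there are at least $(1-c^{n-k-i+1})\,|T|$ valid choices for $w_i$, and multiplying over $i=1,\dots,n-k$ gives
\[g(W)\;\geq\;|T|^{n-k}\prod_{i=1}^{n-k}(1-c^{i})\;\geq\;c'\,|T|^{n-k},\]
using the definition of $c'$ in \eqref{eq-defi-c-prime}.

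Applying both bounds to the $j$-dimensional subspaces $U_v$ and $U_{v'}$ yields $g(U_v)\geq c'\,|T|^{n-j}\geq c'\,g(U_{v'})$, so $g(U_{v'})/g(U_v)\leq 1/c'$. Since the conditional probabilities $\Pr[b_{i,j}=v\mid b_{i,1},\dots,b_{i,j-1}]$ are these counts normalized by the same factor (the total number of extensions summed over $v$), the same ratio bound transfers to the probabilities, completing the proof. The only non-routine step is recognizing that the probability of $b_{i,j}=v$ depends on $v$ only through the subspace $U_v$ and its dimension; once this is in hand, the $c$-dispersed hypothesis together with a telescoping product over the stages of exposure gives the factor $c'$ essentially for free.
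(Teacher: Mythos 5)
Your proposal is correct and follows essentially the same route as the paper: probabilities proportional to counts of extensions in $T^{n-j}$, the trivial upper bound $|T|^{n-j}$, and a lower bound $c'\,|T|^{n-j}$ obtained by exposing the remaining vectors one at a time and applying the $c$-dispersed condition to the span at each step (the paper phrases this lower bound probabilistically via uniform random vectors in $T$, which is the same computation). The extra observation that the count depends only on the subspace $U_v$ is true but not needed; otherwise the arguments coincide.
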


\begin{proof}
For any $x,y\in T\setminus \spn(b_{i,1},\dots, b_{i,j-1})$, we need to show $\Pr[b_{i,j}=x]\geq c'\cdot \Pr[b_{i,j}=y]$, conditioned on the given outcomes of $b_{i,1},\dots, b_{i,j-1}$ (which are linearly independent). Recall that the probability for $b_{i,j}$ to attain a certain vector is proportional to the number of possibilities of for extending the resulting sequence of vectors $b_{i,1},\dots, b_{i,j-1},b_{i,j}$ to a basis of $\F^n$ with vectors in $T$. Hence the desired inequality is equivalent to
\begin{multline*}
|\{(z_{j+1},\dots,z_n)\in T^{n-j}\mid b_{i,1},\dots,b_{i,j-1},x,z_{j+1},\dots,z_n\text{ is a basis of }\F^n\}|\\
\geq c'\cdot|\{(z_{j+1},\dots,z_n)\in T^{n-j}\mid b_{i,1},\dots,b_{i,j-1},y,z_{j+1},\dots,z_n\text{ is a basis of }\F^n\}|
\end{multline*}
The right-hand side can is clearly at most $c'\cdot |T|^{n-j}$. Let us now show that the left-hand side is at least $c'\cdot |T|^{n-j}$.

In order to show this, consider independent uniformly random vectors $z_{j+1},\dots,z_n\in T^{n-j}$. We wish to show that with probability at least $c'$ the vectors $b_{i,1},\dots,b_{i,j-1},x,z_{j+1},\dots,z_n$ form a basis of $\F^n$. This happens if and only if the vectors $b_{i,1},\dots,b_{i,j-1},x,z_{j+1},\dots,z_n$ are linearly independent. Note that $b_{i,1},\dots,b_{i,j-1},x$ are linearly independent (since $b_{i,1},\dots, b_{i,j-1}$ must be linearly independent and $x\in T\setminus \spn(b_{i,1},\dots, b_{i,j-1})$). Now, the probability that the (uniformly random) vector $z_{j+1}\in T$ lies in $\spn(b_{i,1},\dots, b_{i,j-1},x)$ is at most
\[\frac{|T\cap \spn(b_{i,1},\dots, b_{i,j-1},x)|}{|T|}\leq c^{n-\dim \spn(b_{i,1},\dots, b_{i,j-1},x)}=c^{n-j},\]
where the inequality holds since $T$ is $c$-dispersed (see Definition \ref{defi-dispersed}). Hence the vectors $b_{i,1},\dots,b_{i,j-1},x, z_{j+1}$ are linearly independent with probability at least $1-c^{n-j}$. Whenever this happens, we can repeat the same argument for $z_{j+2}$, obtaining that $z_{j+2}$ lies in $\spn(b_{i,1},\dots, b_{i,j-1},x,z_{j+1})$ with probability at most $c^{n-j-1}$. So with probability at least $(1-c^{n-j})\cdot (1-c^{n-j-1})$, the vectors $b_{i,1},\dots,b_{i,j-1},x, z_{j+1}, z_{j+2}$ are linearly independent. Repeating this argument, we can show inductively that for every $h=j+1,\dots,n$, the vectors $b_{i,1},\dots,b_{i,j-1},x, z_{j+1}, \dots, z_{h}$ are linearly independent with probability at least $(1-c^{n-j})\dotsm (1-c^{n+1-h})$. Thus, taking $h=n$, we see that the vectors $b_{i,1},\dots,b_{i,j-1},x,z_{j+1},\dots,z_n$ are linearly independent with probability at least
\[(1-c^{n-j})\cdot (1-c^{n-j-1})\dotsm (1-c)=\prod_{\ell=1}^{n-j}(1-c^\ell)\geq \prod_{\ell=1}^{\infty}(1-c^\ell)=c'.\]

Thus, we proved that
\begin{multline*}
|\{(z_{j+1},\dots,z_n)\in T^{n-j}\mid b_{i,1},\dots,b_{i,j-1},x,z_{j+1},\dots,z_n\text{ is a basis of }\F^n\}|\\
\geq c'\cdot |T|^{n-j}\geq c'\cdot|\{(z_{j+1},\dots,z_n)\in T^{n-j}\mid b_{i,1},\dots,b_{i,j-1},y,z_{j+1},\dots,z_n\text{ is a basis of }\F^n\}|,
\end{multline*}
as desired.
\end{proof}

The next lemma only applies to $b_{i,j}$ with $j\leq \lceil n/2\rceil$ (and only with sufficiently large $n$ in terms of $c$), but note that in order to prove Proposition \ref{propo-1} we only need to consider the vectors $b_{i,j}$ with $j\leq m\leq \lceil n/2\rceil$ (since these are the only vectors appearing in the sets $X_1,\dots,X_m,Y_1,\dots,Y_m$).

\begin{lemma}\label{lemma-new-vector-in-subspace}
Suppose that $n$ is sufficiently large with respect to $c$ such that $c^{n/2}\leq (1-c)/2$. For some $i,j\in \{1,\dots, n\}$ with $j\leq \lceil n/2\rceil$, assume that the vectors $b_{i,1},\dots, b_{i,j-1}\in T$ have already been exposed. For some $1\leq k\leq n$, let $V\su \F^n$ be a subspace of dimension $n-k$. Then, conditional on any fixed outcomes of $b_{i,1},\dots, b_{i,j-1}$, the random vector $b_{i,j}$ satisfies $b_{i,j}\in V$ with probability 
\[\Pr\left[b_{i,j}\in V\mid b_{i,1},\dots, b_{i,j-1}\right]\leq \left(1+(c'/2)\cdot \frac{1-c^k}{c^k}\right)^{-1}.\]
\end{lemma}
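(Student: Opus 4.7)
The plan is to use Lemma \ref{lemma-probabilities-differ-factor-c-prime} to reduce the desired probability bound to a combinatorial counting comparison, and then close that comparison using the $c$-dispersedness of $T$. Set $W = \spn(b_{i,1}, \dots, b_{i,j-1})$, a subspace of dimension $j-1$. Since $b_{i,j}$ is always supported on $T \setminus W$, I would partition this support into $A = (V \cap T) \setminus W$ and $B = T \setminus (V \cup W)$, and write $p_v = \Pr[b_{i,j} = v \mid b_{i,1}, \dots, b_{i,j-1}]$ for $v \in T \setminus W$. Lemma \ref{lemma-probabilities-differ-factor-c-prime} gives $p_v \geq c' \cdot p_u$ for all $v, u$ in this support, so setting $M = \max_{u} p_u$ yields $\Pr[b_{i,j}\in V] \leq |A| \cdot M$ and $\Pr[b_{i,j} \notin V] \geq c' \cdot |B| \cdot M$. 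Dividing appropriately, I obtain
\[
\Pr[b_{i,j} \in V] \leq \left(1 + \frac{c'\,|B|}{|A|}\right)^{-1},
\]
so it suffices (the case $|A|=0$ being trivial) to show $|B|/|A| \geq (1-c^k)/(2c^k)$, or equivalently $2c^k|B| \geq (1-c^k)|A|$.

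Next I would estimate $|A|$ and $|B|$ via $c$-dispersedness. Since $\dim V = n-k$, one has $|A| \leq |V \cap T| \leq c^k|T|$. Writing $|B| = |T \setminus W| - |A| = |T| - |T \cap W| - |A|$, I would then bound $|T \cap W|$ using $\dim W = j-1$ with $j \leq \lceil n/2\rceil$: a quick parity check (separately for $n$ even and odd) gives $n - (j-1) \geq \lfloor n/2\rfloor + 1 > n/2$, so $|T \cap W| \leq c^{n-(j-1)}|T| \leq c^{n/2}|T|$.

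Substituting $|B| = |T| - |T\cap W| - |A|$ into $2c^k|B| \geq (1-c^k)|A|$ and applying the upper bound $|A| \leq c^k|T|$ reduces the target to the clean inequality $2|T \cap W| \leq (1-c^k)|T|$, which follows from $|T\cap W|/|T| \leq c^{n/2} \leq (1-c)/2 \leq (1-c^k)/2$, where the middle step is the hypothesis on $n$ and the last step uses $k \geq 1$. I do not anticipate a significant obstacle here: once the probability-to-counting reduction via Lemma \ref{lemma-probabilities-differ-factor-c-prime} is in place, the rest is a routine manipulation of the two $c$-dispersedness bounds. The one mild care point is the parity check $j - 1 < n/2$, which is exactly what makes the $c^{n/2}$ bound on $|T \cap W|/|T|$ small enough to absorb the slack $(1-c^k)/2$ needed on the other side.
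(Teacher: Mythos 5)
Your proposal is correct and follows essentially the same route as the paper: both reduce the probability bound to a ratio comparison via Lemma \ref{lemma-probabilities-differ-factor-c-prime} (max probability $\rho$, lower bound $c'\rho$), and both close it with the two dispersedness bounds $|V\cap T|\leq c^k|T|$ and $|W\cap T|\leq c^{n/2}|T|$ together with the hypothesis $c^{n/2}\leq(1-c)/2\leq(1-c^k)/2$. The only difference is presentational: you isolate the counting inequality $2c^k|B|\geq(1-c^k)|A|$ before substituting, whereas the paper manipulates $1/\Pr[b_{i,j}\in V]$ directly.
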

\begin{proof}
As in the lemma statement, let us fix any outcomes $b_{i,1},\dots, b_{i,j-1}$, and let $W=\spn(b_{i,1},\dots, b_{i,j-1})$. Since $j\leq \lceil n/2\rceil$, we have $\dim W\leq n/2$ and Definition \ref{defi-dispersed} yields $|W\cap T|\leq c^{n-\dim W}\cdot |T|\leq c^{n/2}\cdot |T|$.

Since the vectors $b_{i,1},\dots, b_{i,j}$ must always be linearly independent, we have $b_{i,j}\in T\setminus W$ for any outcome of $b_{i,j}$ (conditioned on the fixed outcomes of $b_{i,1},\dots, b_{i,j-1}$). By Lemma \ref{lemma-probabilities-differ-factor-c-prime}, for any two vectors in $x,y\in T\setminus W$, we have
\[\Pr[b_{i,j}=x\mid b_{i,1},\dots, b_{i,j-1}]\geq c'\cdot \Pr[b_{i,j}=y\mid b_{i,1},\dots, b_{i,j-1}].\]
Let $\rho$ be the maximum value of $\Pr[b_{i,j}=y\mid b_{i,1},\dots, b_{i,j-1}]$ among all $y\in T\setminus W$. Then for every $x\in T\setminus W$, we have
\[c'\cdot \rho\leq \Pr[b_{i,j}=x\mid b_{i,1},\dots, b_{i,j-1}]\leq \rho.\]
Hence
\[\Pr[b_{i,j}\in V\mid b_{i,1},\dots, b_{i,j-1}]=\sum_{x\in V\cap (T\sm W)}\Pr[b_{i,j}=x\mid b_{i,1},\dots, b_{i,j-1}]\leq |V\cap T|\cdot \rho,\]

On the other hand,
\[\Pr[b_{i,j}\not\in V\mid b_{i,1},\dots, b_{i,j-1}]=\sum_{x\in T\sm (V\cup W)}\Pr[b_{i,j}=x\mid b_{i,1},\dots, b_{i,j-1}]\geq |T\sm (V\cup W)|\cdot c'\cdot \rho.\]
Using that
\[|T\sm (V\cup W)|\geq |T|-|V\cap T|-|W\cap T|\geq |T|-|V\cap T|-c^{n/2}\cdot |T|=(1-c^{n/2})|T|-|V\cap T|,\]
this implies
\[\Pr[b_{i,j}\not\in V\mid b_{i,1},\dots, b_{i,j-1}]\geq \left((1-c^{n/2})|T|-|V\cap T|\right)\cdot c'\cdot \rho.\]
Now, using that $|V\cap T|\leq c^k\cdot |T|$ (by Definition \ref{defi-dispersed} for the $(n-k)$-dimensional space $V$), we can conclude
\begin{align*}
\frac{1}{\Pr[b_{i,j}\in V\mid b_{i,1},\dots, b_{i,j-1}]}&=\frac{\Pr[b_{i,j}\in V\mid b_{i,1},\dots, b_{i,j-1}]+\Pr[b_{i,j}\not\in V\mid b_{i,1},\dots, b_{i,j-1}]}{\Pr[b_{i,j}\in V\mid b_{i,1},\dots, b_{i,j-1}]}\\
&=1+\frac{\Pr[b_{i,j}\not\in V\mid b_{i,1},\dots, b_{i,j-1}]}{\Pr[b_{i,j}\in V\mid b_{i,1},\dots, b_{i,j-1}]}\\
&\geq 1+\frac{\left((1-c^{n/2})|T|-|V\cap T|\right)\cdot c'\cdot \rho}{|V\cap T|\cdot \rho}\\
&=1+c'\cdot \left(\frac{(1-c^{n/2})|T|}{|V\cap T|}-1\right)\\
&\geq 1+c'\cdot \left(\frac{(1-c^{n/2})|T|}{c^k|T|}-1\right)\\
&=1+c'\cdot \frac{1-c^{n/2}-c^k}{c^k}.
\end{align*}
By our assumption $k\geq 1$, we have $c^{n/2}\leq (1-c)/2\leq (1-c^k)/2$ and obtain
\[\frac{1}{\Pr[b_{i,j}\in V\mid b_{i,1},\dots, b_{i,j-1}]}\geq 1+c'\cdot \frac{1-c^{n/2}-c^k}{c^k}\geq 1+c'\cdot \frac{(1-c^k)/2}{c^k}=1+(c'/2)\cdot \frac{1-c^k}{c^k}.\]
This gives the desired inequality.
\end{proof}

The term on the right-hand side of the inequality in Lemma \ref{lemma-new-vector-in-subspace} will occur repeatedly (for different values of $k$) in our proof. To simplify notation, let us write
\begin{equation}\label{eq-definition-alpha}
\alpha_k=\left(1+(c'/2)\cdot \frac{1-c^k}{c^k}\right)^{-1}
\end{equation}
for any positive integer $k$. Note that $0<\alpha_k<1$ (recalling that $0<c<1$ and $0<c'<1$) and that $\alpha_k$ depends only on $c$ and $k$. Now, the bound in Lemma \ref{lemma-new-vector-in-subspace} reads $\Pr\left[b_{i,j}\in V\mid b_{i,1},\dots, b_{i,j-1}\right]\leq \alpha_k$.
Furthermore, note that for every positive integer $k$ we have
\begin{equation}\label{eq-upper-bound-alpha-k}
\alpha_k=\left(1+(c'/2)\cdot \frac{1-c^k}{c^k}\right)^{-1}= \left(1-(c'/2)+ \frac{c'/2}{c^k}\right)^{-1}< \left(\frac{c'/2}{c^k}\right)^{-1}=\frac{2}{c'}\cdot c^k,
\end{equation}
and also note that $\alpha_1\geq \alpha_2\geq \alpha_3\geq \dots$ is a monotone decreasing sequence.

\begin{lemma}\label{lem-alpha-powers}
For any positive integers $k$ and $\ell$, we have $\alpha_{k\ell}\leq (\alpha_k)^{\ell}$.
\end{lemma}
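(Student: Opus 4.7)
The plan is to make the substitution $x = c^k \in (0,1)$ and $\gamma = c'/2 \in (0,1/2)$ (note $\gamma \in (0,1)$ since $0<c'<1$). Multiplying numerator and denominator inside the bracket by $c^k$ rewrites
\[\alpha_k = \left(1 + \gamma \cdot \frac{1-c^k}{c^k}\right)^{-1} = \frac{c^k}{(1-\gamma)c^k + \gamma} = \frac{x}{\gamma + (1-\gamma)x}.\]
Replacing $k$ by $k\ell$ only changes $c^k$ to $c^{k\ell}$, so in the same form $\alpha_{k\ell} = \frac{x^\ell}{\gamma + (1-\gamma)x^\ell}$. Taking reciprocals, the desired inequality $\alpha_{k\ell} \leq (\alpha_k)^\ell$ becomes
\[\frac{\gamma}{x^\ell} + (1-\gamma) \geq \left(\frac{\gamma}{x} + (1-\gamma)\right)^\ell,\]
and setting $y = 1/x = c^{-k} \geq 1$ this is just
\[\gamma y^\ell + (1-\gamma) \geq (\gamma y + (1-\gamma))^\ell.\]

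The key observation is that the right-hand side is the function $f(t) = t^\ell$ evaluated at the convex combination $\gamma \cdot y + (1-\gamma) \cdot 1$ of $y$ and $1$ with weights $\gamma$ and $1-\gamma$. Since $\ell \geq 1$ is a positive integer, the map $f$ is convex on $[0,\infty)$, so Jensen's inequality yields
\[(\gamma y + (1-\gamma) \cdot 1)^\ell \leq \gamma \cdot y^\ell + (1-\gamma) \cdot 1^\ell = \gamma y^\ell + (1-\gamma),\]
which is exactly the needed inequality.

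I do not expect a real obstacle: once one spots the substitution that puts $\alpha_k$ into the form $x / (\gamma + (1-\gamma)x)$, the statement collapses to a one-line application of the convexity of the $\ell$-th power function. The only items to verify carefully are the algebraic rewriting of $\alpha_k$ and the fact that $\gamma \in (0,1)$ (so that we genuinely have a convex combination of $y$ and $1$).
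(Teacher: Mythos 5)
Your proof is correct, and it is essentially the paper's argument in a cleaner guise: both reduce the claim to the inequality $\gamma t^\ell + (1-\gamma) \geq (\gamma t + (1-\gamma))^\ell$ for $\gamma = c'/2 \in (0,1)$, which the paper verifies by a binomial expansion and you obtain directly from the convexity of $t \mapsto t^\ell$. The algebraic rewriting $\alpha_k = x/(\gamma+(1-\gamma)x)$ and the check that $\gamma\in(0,1)$ are both accurate, so there is nothing to fix.
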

\begin{proof}
Note that
\begin{multline*}
\left(c^{k}+(c'/2)\cdot (1-c^k)\right)^{\ell}=c^{k\ell}+\sum_{i=1}^{\ell}\binom{\ell}{i}(c'/2)^i\cdot (1-c^k)^i\cdot (c^k)^{\ell-i}\\
\leq c^{k\ell}+(c'/2)\cdot\sum_{i=1}^{\ell}\binom{\ell}{i} (1-c^k)^i\cdot (c^k)^{\ell-i}=c^{k\ell}+(c'/2)\cdot (1-c^{k\ell}),
\end{multline*}
where we used that $c'/2<1$. Dividing by $c^{k\ell}$ yields
\[\left(1+(c'/2)\cdot \frac{1-c^k}{c^k}\right)^{\ell}\leq 1+(c'/2)\cdot \frac{1-c^{k\ell}}{c^{k\ell}},\]
which upon taking inverses gives the desired inequality $(\alpha_k)^{\ell}\geq \alpha_{k\ell}$.
\end{proof}

Let us now define
\begin{equation}\label{eq-defi-delta}
\delta=\frac{1}{3}\cdot (1-\alpha_1)\cdot (1-\alpha_2)\cdot (1-\alpha_3)\dotsm = \frac{1}{3}\cdot \prod_{k=1}^{\infty}(1-\alpha_k).
\end{equation}
This infinite product is well-defined, since the corresponding sequence of partial products is a monotone decreasing sequence of non-negative numbers and therefore converges. Note that $\delta$ only depends on $c$.

Before starting the proof of Proposition \ref{propo-1} in the following section, we first establish that $\delta>0$.

\begin{lemma}\label{lemma-delta-positive}
We have $0<\delta<1/3$.
\end{lemma}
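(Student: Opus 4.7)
The plan is to handle the two inequalities separately, mirroring the structure of the earlier proof that $0 < c' < 1$.

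For the upper bound $\delta < 1/3$, I would simply note that $0 < \alpha_k < 1$ for every positive integer $k$ (as observed right after the definition of $\alpha_k$), so each factor $(1-\alpha_k)$ lies in $(0,1)$. Since the partial products are therefore strictly decreasing in $(0,1)$, the infinite product is strictly less than $1$, giving $\delta < 1/3$.

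For the lower bound $\delta > 0$, the main tool is the estimate $\alpha_k \leq (2/c')\cdot c^k$ from (\ref{eq-upper-bound-alpha-k}). This makes $\sum_{k=1}^{\infty}\alpha_k$ a convergent geometric series, so I can pick a positive integer $\ell$ large enough that $\sum_{k=\ell}^{\infty}\alpha_k \leq 1/2$. Then, using the elementary inequality $\prod_{k=\ell}^{m}(1-\alpha_k) \geq 1 - \sum_{k=\ell}^{m}\alpha_k$ (valid when all $\alpha_k \in [0,1]$, proved by induction on $m$), I obtain $\prod_{k=\ell}^{\infty}(1-\alpha_k) \geq 1/2 > 0$. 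The remaining tail-free product $\prod_{k=1}^{\ell-1}(1-\alpha_k)$ is a finite product of strictly positive factors, hence strictly positive. Splitting
\[
\delta = \frac{1}{3}\cdot \prod_{k=1}^{\ell-1}(1-\alpha_k)\cdot \prod_{k=\ell}^{\infty}(1-\alpha_k)
\]
then shows $\delta > 0$.

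There is really no obstacle here: both directions are elementary. The only minor choice is whether to cite convergence of the infinite product from the standard fact that $\sum \alpha_k < \infty$ with $\alpha_k \in [0,1)$ implies $\prod(1-\alpha_k) > 0$, or to give the short truncation argument inline. I would prefer the inline argument, since it exactly parallels the computation used in the earlier lemma showing $c' > 0$ and keeps the paper self-contained.
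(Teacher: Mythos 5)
Your proposal is correct and follows essentially the same route as the paper: the upper bound from $0<\alpha_k<1$, and the lower bound by splitting off a finite head and bounding the tail product by $1-\sum_{k\geq\ell}\alpha_k$ using the geometric bound $\alpha_k<(2/c')c^k$ from (\ref{eq-upper-bound-alpha-k}). The only (immaterial) difference is that you choose $\ell$ so the tail sum is at most $1/2$, whereas the paper only needs it below $1$.
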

\begin{proof}
Since $0<\alpha_k<1$ for all positive integers $k$, it is easy to see that $0\leq \delta<1/3$. In order to establish that $\delta>0$, choose a sufficiently large positive integer $\ell$ such that $c^\ell<(c'/2)\cdot (1-c)$. Then by (\ref{eq-upper-bound-alpha-k}) we have
\begin{multline*}
\delta=\frac{1}{3}\cdot\prod_{k=1}^{\ell-1}(1-\alpha_k)\cdot \prod_{i=k}^{\infty}(1-\alpha_k)\geq \frac{1}{3}\cdot\prod_{k=1}^{\ell-1}(1-\alpha_k)\cdot (1-\alpha_\ell-\alpha_{\ell+1}-\dots)\\
\geq \frac{1}{3}\cdot\prod_{k=1}^{\ell-1}(1-\alpha_k)\cdot\left(1-\frac{2}{c'}\cdot (c^{\ell}+c^{\ell+1}+\dots)\right)=\frac{1}{3}\cdot\prod_{k=1}^{\ell-1}(1-\alpha_k)\cdot \left(1-\frac{2}{c'}\cdot \frac{c^\ell}{1-c}\right)>0,
\end{multline*}
noting that the last term is a finite product of positive factors.
\end{proof}

\section{Finding the perfect matching: proof of Proposition \ref{propo-1}}
\label{sect-proof-propo}

Let us take $m\in \{\lfloor n/2\rfloor,\lceil n/2\rceil\}$ as in Proposition \ref{propo-1}. Recall that $0<c<1$ is fixed. Let us assume that $n$ is sufficiently large with respect to $c$, such that in particular $c^{n/2}\leq (1-c)/2$ (note that then we can apply Lemma \ref{lemma-new-vector-in-subspace} whenever $j\leq m$).

Let $G$ be the graph in Proposition \ref{propo-1}, i.e.\ $G$ is the bipartite graph with $m$ vertices on the left and $m$ vertices on the right, where for all $h,j\in \{1,\dots,m\}$ we draw an edge between vertex $h$ on the left and vertex $j$ on the right if and only if the $n$ vectors in $X_h\cup Y_{j}$ are linearly independent. We need to prove that with probability $1-o(1)$ (as $n\to \infty$) this graph $G$ has a perfect matching.

Our first lemma states that with high probability each of the multi-sets $X_1,\dots, X_m$ individually is linearly independent in $\F^n$ (and is hence in particular just an ordinary set of vectors with no repetitions).

\begin{lemma}\label{lem-no-isolated-vert}
With probability $1-o(1)$, each of the multi-sets $X_1,\dots, X_m$ is linearly independent.
\end{lemma}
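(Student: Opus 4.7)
The plan is to fix $j \in \{1, \dots, m\}$, bound the probability that $X_j$ is linearly dependent by exposing its $n'$ vectors one at a time, and then take a union bound over $j$.

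For a fixed $j$, I would expose the vectors $b_{1,j}, b_{2,j}, \dots, b_{n',j}$ in this order. For each $i \in \{1, \dots, n'\}$, let $V_i = \spn(b_{1,j},\dots,b_{i-1,j})$ and let $\mathcal{A}_i$ be the event ``$b_{i,j} \in V_i$.'' Then the multi-set $X_j$ is linearly dependent if and only if some $\mathcal{A}_i$ occurs (this also accounts for possible repetitions of vectors inside $X_j$). I would now condition on any fixed outcomes of $b_{1,j}, \dots, b_{i-1,j}$, which determines $V_i$ with $\dim V_i \leq i-1 \leq n'-1 < n$. Since the bases $B_1,\dots,B_n$ are independent, this conditioning does not affect the distribution of $b_{i,j}$, which is governed entirely by the internal history $b_{i,1},\dots,b_{i,j-1}$ within basis $B_i$. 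Because $j \leq m \leq \lceil n/2 \rceil$, Lemma \ref{lemma-new-vector-in-subspace} applies with $k = n - \dim V_i \geq n-i+1 \geq 1$, yielding
\[\Pr\bigl[b_{i,j} \in V_i \,\big|\, b_{i,1},\dots,b_{i,j-1},\, b_{1,j},\dots,b_{i-1,j}\bigr] \leq \alpha_{n-\dim V_i} \leq \alpha_{n-i+1},\]
using that $\alpha_k$ is monotone decreasing in $k$. Marginalizing over all these conditioned outcomes gives $\Pr[\mathcal{A}_i] \leq \alpha_{n-i+1}$.

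By the union bound over $i = 1, \dots, n'$, together with $n' \leq \lceil n/2 \rceil$ and the estimate $\alpha_k \leq (2/c')\cdot c^k$ from~(\ref{eq-upper-bound-alpha-k}),
\[\Pr[X_j \text{ is linearly dependent}] \leq \sum_{i=1}^{n'} \alpha_{n-i+1} = \sum_{k = n - n' + 1}^{n} \alpha_k \leq \frac{2}{c'} \sum_{k = \lfloor n/2 \rfloor + 1}^{\infty} c^k = \frac{2}{c'(1-c)}\, c^{\lfloor n/2 \rfloor + 1} = O(c^{n/2}).\]
Finally, a union bound over $j \in \{1, \dots, m\}$ with $m \leq n$ gives
\[\Pr[\text{some } X_j \text{ is linearly dependent}] \leq m \cdot O(c^{n/2}) = O(n\, c^{n/2}) = o(1),\]
as desired.

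The only real care point is organizing the conditioning correctly: one must use that $V_i$ is measurable with respect to the bases $B_1,\dots,B_{i-1}$, which are independent of $B_i$, so that Lemma \ref{lemma-new-vector-in-subspace} (which is a statement about the internal exposure process within a single basis) can be invoked against a subspace determined externally. Once this is set up, the geometric decay $\alpha_k = O(c^k)$ easily overwhelms the polynomial union-bound losses since the vectors $b_{i,j}$ in question all lie in a range where $k = n - \dim V_i \geq n/2$.
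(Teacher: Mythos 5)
Your proposal is correct and follows essentially the same route as the paper: expose the vectors of each $X_j$ one at a time, apply Lemma \ref{lemma-new-vector-in-subspace} to bound the probability that each new vector lands in the span of the previously exposed ones, and finish with union bounds over $i$ and $j$. The only cosmetic difference is that you sum the geometric series $\sum_i \alpha_{n-i+1}$ while the paper bounds every term by $\alpha_{n-n'+1}\leq (2/c')c^{n/2}$ and multiplies by $n'$; both give the required $o(1/n)$ per index $j$.
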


\begin{proof}
It suffices to prove that for each $j\in \{1,\dots,m\}$, the probability that $X_j$ is not linearly independent is at most $o(1/n)$. Indeed, then a union bound over all $j\in \{1,\dots,m\}$ shows that with probability at least $1-m\cdot o(1/n)=1-o(1)$, each of $X_1,\dots, X_m$ is linearly independent.

So let $j\in \{1,\dots,m\}$. We prove that the probability that $X_j$ is not linearly independent is at most $o(1/n)$, even when conditioning on any outcomes of the sets $X_1,\dots,X_{j-1}$, i.e.\ of the vectors $b_{i,1},\dots,b_{i,j-1}$ for $i=1,\dots,n'$. So let us fix any outcomes of $b_{i,1},\dots,b_{i,j-1}$ for $i=1,\dots,n'$.

Let us now expose the vectors of $X_j=\{b_{1,j},\dots,b_{n',j}\}$ one vector at a time in this order. If $X_j$ is not linearly independent, then one of the vectors $b_{i,j}$ for some $i\in \{1,\dots,n'\}$ must be in the span of the previously exposed vectors $b_{1,j},\dots,b_{i-1,j}$. For each $i\in \{1,\dots,n'\}$, this span has dimension at most $i-1\leq n'-1$, so by Lemma \ref{lemma-new-vector-in-subspace} and (\ref{eq-definition-alpha}) the probability that $b_{i,j}$ is inside $\spn(b_{1,j},\dots,b_{i-1,j})$ is at most $\alpha_{n-n'+1}\leq (2/c')\cdot c^{n-n'+1}\leq (2/c')\cdot c^{n/2}$ (here, we used (\ref{eq-upper-bound-alpha-k}) and $n'\leq \lceil n/2\rceil$). Thus, the total probability that $X_j$ is not linearly independent is indeed at most $n'\cdot (2/c')\cdot c^{n/2}\leq n \cdot (2/c')\cdot c^{n/2} = o(1/n)$ (recalling that $0<c<1$).
\end{proof}

Recall that we need to prove that  the bipartite graph $G$ has a perfect matching with probability $1-o(1)$. In order to do so, we will show that that with probability $1-o(1)$ the graph $G$ satisfies the condition in Hall's marriage theorem. To establish this, we first show that with probability $1-o(1)$ all vertices in $G$ have high degree (see Lemmas \ref{lem-high-degrees-left} and \ref{lem-high-degrees-right} below). And second, we show for some $L$ (depending only on $c$), that with probability $1-o(1)$, for any $L$ distinct vertices on the left side the union of their neighborhoods is large (see Lemma \ref{lem-L-tuple-nbhd} below).

Recall that we defined $\delta$ (depending only on $c$) in (\ref{eq-defi-delta}) and we established in Lemma  \ref{lemma-delta-positive} that $\delta>0$. The following lemma is a key step towards our our first goal of showing that with probability $1-o(1)$ all vertices in $G$ have high degree. The lemma states, roughly speaking, that for every vertex $h$ on the left, each vertex $j$ on the right has an edge to $h$ on the left with probability at least $3\delta$ (in other words, $X_h\cup Y_j$ is linearly independent with probability at least $3\delta$). More precisely, the lemma states that this is true even when conditioning on the outcomes of the sets $Y_1,\dots,Y_{j-1}$.

\begin{lemma}\label{lem-prob-edge}
Let $h,j\in \{1,\dots,m\}$, and fix any outcome of $X_h=\{b_{1,h},\dots,b_{n',h}\}$ such that $b_{1,h},\dots,b_{n',h}$ are linearly independent. Furthermore, consider any fixed outcomes of the vectors in the sets $Y_1,\dots,Y_{j-1}$, i.e.\ of the vectors $b_{i,1},\dots,b_{i,j-1}$ for all $i=n'+1,\dots,n$. Then, conditional on the outcomes of the vectors in $Y_1,\dots,Y_{j-1}$, subject to the randomness of  $Y_j=\{b_{n'+1,j},\dots,b_{n,j}\}$, the multi-set $X_h\cup Y_j$ is linearly independent with probability at least $3\delta$.
\end{lemma}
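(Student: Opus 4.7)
The plan is to expose the vectors of $Y_j = \{b_{n'+1,j}, \ldots, b_{n,j}\}$ one at a time in order of increasing basis index, and at each step apply Lemma \ref{lemma-new-vector-in-subspace} to control the probability that the newly exposed vector falls inside the span of $X_h$ together with the previously exposed vectors of $Y_j$. The key structural observation is that, since the bases $B_1, \ldots, B_n$ are independent and the vectors $b_{n'+1,j}, \ldots, b_{n,j}$ come from the distinct bases $B_{n'+1}, \ldots, B_n$, they are mutually independent conditional on the fixed past (which consists of $X_h$, involving only $B_1, \ldots, B_{n'}$, and $Y_1, \ldots, Y_{j-1}$, involving only earlier vectors from $B_{n'+1}, \ldots, B_n$).

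Concretely, I would set $V_0 = \spn(X_h)$, which has dimension $n'$ by hypothesis, and for $i = 1, \ldots, n - n'$ define $V_i = \spn(V_0 \cup \{b_{n'+1,j}, \ldots, b_{n'+i,j}\})$. Let $\mathcal{A}_i$ be the event $\dim V_i = n' + i$; equivalently, the event that $X_h \cup \{b_{n'+1,j}, \ldots, b_{n'+i,j}\}$ is linearly independent. Then $X_h \cup Y_j$ is linearly independent precisely when $\mathcal{A}_{n-n'}$ holds, and by the chain rule $\Pr[\mathcal{A}_{n-n'}] = \prod_{i=1}^{n-n'} \Pr[\mathcal{A}_i \mid \mathcal{A}_{i-1}]$, with $\mathcal{A}_0$ the sure event.

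To bound a single factor, fix any realization of $b_{n'+1,j}, \ldots, b_{n'+i-1,j}$ consistent with $\mathcal{A}_{i-1}$; then $V_{i-1}$ is a specific subspace of codimension $k := n - n' - i + 1 \geq 1$. By the conditional independence noted above, the law of $b_{n'+i,j}$ given the fixed past together with this additional conditioning coincides with its law given only the fixed vectors $b_{n'+i,1}, \ldots, b_{n'+i,j-1}$ of its own basis. Since $j \leq m \leq \lceil n/2 \rceil$ and $n$ is large enough that $c^{n/2} \leq (1-c)/2$, Lemma \ref{lemma-new-vector-in-subspace} applies and gives $\Pr[b_{n'+i,j} \in V_{i-1}] \leq \alpha_k$; averaging over outcomes consistent with $\mathcal{A}_{i-1}$ yields $\Pr[\mathcal{A}_i \mid \mathcal{A}_{i-1}] \geq 1 - \alpha_{n-n'-i+1}$.

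Multiplying these bounds over $i = 1, \ldots, n-n'$ and reindexing $k = n-n'-i+1$, I obtain
\[\Pr[\mathcal{A}_{n-n'}] \geq \prod_{k=1}^{n-n'}(1-\alpha_k) \geq \prod_{k=1}^{\infty}(1-\alpha_k) = 3\delta,\]
where the last equality is the definition of $\delta$ in (\ref{eq-defi-delta}) and the intermediate inequality uses that each factor $1 - \alpha_k$ lies in $(0,1)$. The argument has no substantive obstacle beyond Lemma \ref{lemma-new-vector-in-subspace} itself; the only point requiring some care is keeping the conditioning clean, so that at each step one really is applying Lemma \ref{lemma-new-vector-in-subspace} to a single fresh vector $b_{n'+i,j}$ with a deterministic subspace $V_{i-1}$ of the correct codimension.
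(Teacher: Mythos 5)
Your proposal is correct and matches the paper's own proof essentially step for step: both expose the vectors of $Y_j$ one at a time, apply Lemma \ref{lemma-new-vector-in-subspace} with codimension $n-n'-i+1$ at the $i$-th step (using the independence of the different bases to keep the conditioning clean), and multiply to get $\prod_{k=1}^{n-n'}(1-\alpha_k)\geq \prod_{k=1}^{\infty}(1-\alpha_k)=3\delta$. No discrepancies to report.
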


\begin{proof}
Let us expose the vectors of $Y_j=\{b_{n'+1,j},\dots,b_{n,j}\}$ one vector at a time. Note that these random vectors are actually probabilistically independent of each other (since the different bases $B_i$ are independent). By assumption, $X_h=\{b_{1,h},\dots,b_{n',h}\}$ is linearly independent. Hence $\spn (X_h)$ is a subspace of dimension $n'$, and by Lemma \ref{lemma-new-vector-in-subspace} and (\ref{eq-definition-alpha}) with probability at least $1-\alpha_{n-n'}$ the vector $b_{n'+1,j}$ (when conditioning on the given outcomes of $b_{n'+1,1},\dots ,b_{n'+1,j-1}$) is outside this subspace and hence linearly independent from $X_h$. Assuming that this hold, $\spn (X_h\cup \{b_{n'+1,j}\})$ is a subspace of dimension $n'+1$. Then with probability at least $1-\alpha_{n-n'-1}$ the vector $b_{n'+2,j}$ (when conditioning on the given outcomes of $b_{n'+2,1},\dots ,b_{n'+2,j-1}$) is outside $\spn (X_h\cup \{b_{n'+1,j}\})$ and hence linearly independent from $X_h\cup \{b_{n'+1,j}\}$. Continuing like this, we can show that for every $\ell=n'+1,\dots,n$, the multi-set $X_h\cup \{b_{n'+1,j},\dots,b_{\ell,j}\}$ (when conditioning on the given outcomes of $b_{i,1},\dots,b_{i,j-1}$ for all $i=n'+1,\dots,\ell$) is linearly independent with probability at least $(1-\alpha_{n-n'})(1-\alpha_{n-n'-1})\dotsm (1-\alpha_{n-\ell+1})$. In particular, for $\ell=n$ we obtain that $X_h\cup Y_j$ is linearly independent with probability at least
\[(1-\alpha_{n-n'})(1-\alpha_{n-n'-1})\dotsm (1-\alpha_{1})= \prod_{k=1}^{n-n'} (1-\alpha_k)\geq \prod_{k=1}^{\infty} (1-\alpha_k)=3\delta,\]
when conditioning on the given outcomes of $b_{i,1},\dots,b_{i,j-1}$ for all $i=n'+1,\dots,n$.
\end{proof}

As an easy corollary of Lemma \ref{lem-prob-edge} and Lemma \ref{lem-no-isolated-vert}, we obtain the following lemma.

\begin{lemma}\label{lem-high-degrees-left}
With probability $1-o(1)$, in the graph $G$ every vertex on the left has degree at least $2\delta m$.
\end{lemma}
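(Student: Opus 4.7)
The plan is to combine Lemma \ref{lem-prob-edge} (which gives a uniform lower bound of $3\delta$ on the probability that a fixed linearly independent $X_h$ together with a fresh $Y_j$ forms a linearly independent multi-set, even after conditioning on previous $Y$'s) with Lemma \ref{lem-no-isolated-vert} (which ensures that all $X_h$ are linearly independent with high probability), and apply a Chernoff-type tail bound followed by a union bound over the $m$ vertices on the left.

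First I would condition on the event $\mathcal{A}$ that $X_1, \dots, X_m$ are all linearly independent; by Lemma \ref{lem-no-isolated-vert} this event has probability $1-o(1)$. Crucially, the $X$'s depend only on the vectors $b_{i,j}$ with $i \leq n'$, while the $Y$'s depend only on the vectors $b_{i,j}$ with $i > n'$; since the bases $B_1,\dots,B_n$ are independent, the entire $Y$-family $(Y_1,\dots,Y_m)$ is independent of the $X$-family. So I may fix an arbitrary outcome of $X_1,\dots,X_m$ compatible with $\mathcal{A}$ and study the degrees using only the randomness of the $Y_j$'s.

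Now fix a vertex $h \in \{1,\dots,m\}$ on the left. Its degree in $G$ equals $\deg(h) = \sum_{j=1}^{m} Z_j^{(h)}$, where $Z_j^{(h)} = \mathbf{1}[X_h \cup Y_j \text{ is linearly independent}]$. Exposing $Y_1, Y_2, \dots, Y_m$ one at a time, Lemma \ref{lem-prob-edge} applied to the (fixed, linearly independent) $X_h$ gives
\[
\Pr\bigl[Z_j^{(h)} = 1 \,\big|\, Y_1,\dots,Y_{j-1}\bigr] \geq 3\delta
\]
for every $j$ and every history. A standard coupling argument then shows that $\deg(h)$ stochastically dominates a $\mathrm{Bin}(m, 3\delta)$ random variable. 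Applying a Chernoff bound to this dominated Binomial yields
\[
\Pr[\deg(h) < 2\delta m \mid \mathcal{A}] \leq \exp(-\Omega(\delta m)),
\]
with the implicit constant depending only on $\delta$ (hence only on $c$).

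Finally, a union bound over the $m$ choices of $h$ shows that conditional on $\mathcal{A}$ the probability that some left-vertex has degree less than $2\delta m$ is at most $m \cdot \exp(-\Omega(\delta m)) = o(1)$. Since $\mathcal{A}$ itself has probability $1-o(1)$, this gives the claimed conclusion. There is no substantive obstacle here: the two ingredients (Lemmas \ref{lem-no-isolated-vert} and \ref{lem-prob-edge}) do all the probabilistic work, and the remaining argument is a routine stochastic-domination plus Chernoff plus union bound. The only point requiring care is noting the $X$–$Y$ independence, which lets us condition on the outcome of the $X_h$'s without disturbing the distribution of the $Y_j$'s.
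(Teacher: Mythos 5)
Your proposal is correct and follows essentially the same route as the paper: condition on all $X_h$ being linearly independent via Lemma \ref{lem-no-isolated-vert}, use Lemma \ref{lem-prob-edge} with sequential exposure of $Y_1,\dots,Y_m$ to get stochastic domination of $\operatorname{Bin}(m,3\delta)$, then apply Chernoff and a union bound over the $m$ left vertices. The only cosmetic difference is that the correct Chernoff exponent is $\Omega(\delta^2 m)$ rather than $\Omega(\delta m)$, which changes nothing since $\delta$ is a constant depending only on $c$.
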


\begin{proof}
Let us first expose the bases $B_1,\dots,B_{n'}$, which determine the sets $X_1,\dots,X_m$. By Lemma \ref{lem-no-isolated-vert}, with probability $1-o(1)$ each of the multi-sets $X_1,\dots, X_m$ is linearly independent. So let us fix an outcome of $X_1,\dots, X_m$, where each of these sets is linearly independent.

It now suffices to prove that, subject to the randomness of $Y_1,\dots,Y_m$, for each $h\in \{1,\dots,m\}$ with probability at most $o(1/n)$ vertex $h$ on the left has degree less than $2\delta m$. Indeed, then by a union bound, with probability at most $m\cdot o(1/n)=o(1)$, there is a vertex on the left with degree less than $2\delta m$.

So consider some $h\in \{1,\dots,m\}$. By Lemma \ref{lem-prob-edge}, subject to the randomness of $Y_1$, there is an edge between vertex $h$ on the left and vertex $1$ on the right with probability at least $3\delta$. After exposing $Y_1$ and conditioning on its outcome, subject to the randomness of $Y_2$, by Lemma \ref{lem-prob-edge} there is an edge between vertex $h$ on the left and vertex $2$ on the right with probability at least $3\delta$. Continuing this, we see that for every $j\in \{1,\dots,m\}$, when conditioning on any outcomes of $Y_1,\dots,Y_{j-1}$, subject to the randomness of $Y_j$, there is an edge between vertex $h$ on the left and vertex $j$ on the right with probability at least $3\delta$. Thus, subject to the randomness of $Y_1,\dots,Y_m$, the degree of vertex $h$ on the left is a random variable that stochastically dominates a binomial random variable $Z\sim \operatorname{Bin}(m,3\delta)$. Hence the probability that vertex $h$ on the left has degree less than $2\delta m$ is at most
\[ \Pr[Z<2\delta m]\leq \exp\left(-2\cdot \frac{(3\delta m-2\delta m)^2}{m}\right)=e^{-2\delta^2 m} \leq e^{-\delta^2\cdot (n-1)}=o(1/n),\]
as desired. Here, in the first step we used the Chernoff bound (see e.g.\ \cite[Theorem A.1.4]{alon-spencer}) and in the third step we used that $m\in \{\lfloor n/2\rfloor,\lceil n/2\rceil\}$.
\end{proof}

Analogously to Lemma \ref{lem-high-degrees-left}, we can also show that with high probability, every vertex on the right has degree at least $2\delta m$.

\begin{lemma}\label{lem-high-degrees-right}
With probability $1-o(1)$, in the graph $G$ every vertex on the right has degree at least $2\delta m$.
\end{lemma}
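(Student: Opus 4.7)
The plan is to run the symmetric version of the argument used for Lemma \ref{lem-high-degrees-left}, exchanging the roles of the $X$'s and $Y$'s throughout: we will expose the bases $B_{n'+1},\dots,B_n$ (and hence the sets $Y_1,\dots,Y_m$) first, and then use the randomness of $X_1,\dots,X_m$ to pick up edges incident to a fixed vertex on the right. To make this work, I need to reprove the two auxiliary facts -- Lemmas \ref{lem-no-isolated-vert} and \ref{lem-prob-edge} -- with $X$ and $Y$ swapped; everything else then carries over mechanically.

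The first auxiliary fact is that, with probability $1-o(1)$, each of the multi-sets $Y_1,\dots,Y_m$ is linearly independent. I would copy the proof of Lemma \ref{lem-no-isolated-vert} almost verbatim: expose the vectors of $Y_j=\{b_{n'+1,j},\dots,b_{n,j}\}$ one at a time; at each step the span of the previously exposed vectors has dimension at most $n-n'-1$, which we write as $n-k$ with $k\geq n'+1\geq n/2$. Since $j\leq m\leq \lceil n/2\rceil$, Lemma \ref{lemma-new-vector-in-subspace} applies and (\ref{eq-upper-bound-alpha-k}) bounds the probability of falling in this span by $\alpha_k\leq (2/c')c^{n/2}$, which is super-polynomially small; a union bound over the $n-n'$ steps and then over $j\in\{1,\dots,m\}$ gives the claim.

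The second auxiliary fact is the mirror of Lemma \ref{lem-prob-edge}: if $Y_j$ is fixed and linearly independent and we condition on any outcomes of $X_1,\dots,X_{h-1}$, then subject to the randomness of $X_h$ the set $X_h\cup Y_j$ is linearly independent with probability at least $3\delta$. To prove this I would expose $b_{1,h},b_{2,h},\dots,b_{n',h}$ in order. Provided $Y_j\cup\{b_{1,h},\dots,b_{i-1,h}\}$ has remained linearly independent so far, its span has dimension $(n-n')+(i-1)=n-(n'-i+1)$, and since $h\leq \lceil n/2\rceil$, Lemma \ref{lemma-new-vector-in-subspace} gives that $b_{i,h}$ falls into this span with probability at most $\alpha_{n'-i+1}$. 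Multiplying over $i=1,\dots,n'$ yields $\prod_{k=1}^{n'}(1-\alpha_k)\geq \prod_{k=1}^{\infty}(1-\alpha_k)=3\delta$, as required. The only point requiring care is that Lemma \ref{lemma-new-vector-in-subspace} states its bound conditionally on $b_{i,1},\dots,b_{i,h-1}$ alone, whereas here we are also conditioning on $Y_j$ and on $X_1,\dots,X_{h-1}$; but $Y_j$ is independent of basis $B_i$ entirely, and $X_1,\dots,X_{h-1}$ contributes from $B_i$ only the vectors $b_{i,1},\dots,b_{i,h-1}$, so the conditional distribution of $b_{i,h}$ is unchanged.

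With these two ingredients the proof concludes exactly as in Lemma \ref{lem-high-degrees-left}: expose $Y_1,\dots,Y_m$, fix an outcome in which each $Y_j$ is linearly independent, and for each $j\in\{1,\dots,m\}$ expose $X_1,\dots,X_m$ in order. The number of $h\in\{1,\dots,m\}$ with $X_h\cup Y_j$ linearly independent then stochastically dominates $\operatorname{Bin}(m,3\delta)$, so by the Chernoff bound the degree of vertex $j$ on the right falls below $2\delta m$ with probability at most $e^{-2\delta^2 m}=o(1/n)$, and a union bound over the $m$ vertices closes out the argument. I do not anticipate any substantive obstacle: the whole construction is symmetric under $X\leftrightarrow Y$, and both inequalities $n'\leq \lceil n/2\rceil$ and $n-n'\leq \lceil n/2\rceil$ hold, which is exactly what makes Lemma \ref{lemma-new-vector-in-subspace} applicable in both directions.
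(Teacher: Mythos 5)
Your proposal is correct and takes exactly the route the paper intends: the paper's proof of this lemma consists of invoking the left--right symmetry of the setup and noting that one should state and prove the analogues of Lemmas \ref{lem-no-isolated-vert} and \ref{lem-prob-edge} with the two sides interchanged, which are precisely the two auxiliary facts you spell out. Your dimension counts (codimension $k\geq n'+1$ in the first, $n'-i+1$ in the second, both times with the exposed vector sitting in position at most $\lceil n/2\rceil$ of its basis so that Lemma \ref{lemma-new-vector-in-subspace} applies) and your remark about why the extra conditioning on the independent bases does not alter the distribution of $b_{i,h}$ are all accurate.
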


\begin{proof}
Note that our setup is completely symmetric in the left and the right side. So Lemma \ref{lem-high-degrees-right} follows in a completely analogous way to Lemma \ref{lem-high-degrees-left} (by first stating and proving the analogous versions of Lemmas \ref{lem-no-isolated-vert} and  \ref{lem-prob-edge} with left and right side interchanged).
\end{proof}

Let us now fix a sufficiently large positive integer $L$ depending only on $c$ such that
\begin{equation}\label{eq-def-L}
(1-3\delta)^L\leq \delta/2
\end{equation}
(recall that $0<\delta<1/3$ only depends on $c$).  Furthermore, let us fix a sufficiently large positive integer $K$ depending only on $c$ such that
\[L\cdot \frac{2}{c'}\cdot \frac{c^K}{1-c}\leq \frac{\delta}{2}.\]
Recall that we are assuming that $n$ is sufficiently large with respect to $c$, so we can in particular assume that $n\geq 2K$ (and therefore $\lfloor n/2\rfloor\geq K$). Note that by (\ref{eq-upper-bound-alpha-k}), we have
\begin{equation}\label{eq-summing-up-alpha}
\alpha_K+\alpha_{K+1}+\alpha_{K+2}+\dots< \frac{2}{c'}\cdot (c^K+c^{K+1}+c^{K+2}+\dots)=\frac{2}{c'}\cdot \frac{c^K}{1-c}\leq \frac{\delta}{2L}.
\end{equation}

Our second goal is now to show that with probability $1-o(1)$, for any $L$ distinct vertices on the left side of $G$, the union of their neighborhoods is large. When showing this, we have to take care of the possibility that certain subspaces that appear in the argument might have unusually large intersections. The following definition captures these ``bad'' situations that would be problematic for our argument.

For $j\in \{1,\dots,m\}$ and $k\in \{1,\dots,K\}$, let us define $Y_j^{(k)}=\{b_{n'+1,j},\dots,b_{n-k,j}\}$. In other words, $Y_j^{(k)}$ is obtained from $Y_j=\{b_{n'+1,j},\dots,b_{n,j}\}$ by omitting the last $k$ vectors.

\begin{definition}\label{defi-bad}
For a subset $H\su \{1,\dots,m\}$ of size $1\leq |H|\leq L$, as well as $j\in \{1,\dots,m\}$ and $k\in \{1,\dots,K\}$, let us say that $(H,Y_j^{(k)})$ is \emph{bad} if
\[\dim\left(\bigcap_{h\in H}\spn(X_h\cup Y_j^{(k)})\right)>n-k\cdot |H|.\]
Furthermore, let us say that $(H,j)$ is \emph{bad} if $(H,Y_j^{(k)})$ is bad for some $k\in \{1,\dots,K\}$.
\end{definition}

The next lemma states that with high probability there are no bad pairs $(H,j)$.

\begin{lemma}\label{lem-no-bad-set}
With probability $1-o(1)$, for every subset $H\su \{1,\dots,m\}$ of size $1\leq |H|\leq L$ and every $j\in \{1,\dots,m\}$ the pair $(H,j)$ is not bad.
\end{lemma}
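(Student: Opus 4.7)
The plan is to take a union bound over all tuples $(H,j,k)$ with $H\subseteq\{1,\dots,m\}$, $1\leq |H|\leq L$, $j\in\{1,\dots,m\}$, and $k\in\{1,\dots,K\}$. Since $L$ and $K$ depend only on $c$, and $m\leq n$, there are at most $O(n^{L+1})$ such tuples, so it suffices to show $\Pr[(H,Y_j^{(k)})\textnormal{ is bad}] = o(n^{-L-1})$ for each fixed tuple. The case $|H|=1$ is trivial, since $\dim\spn(X_h\cup Y_j^{(k)})\leq |X_h\cup Y_j^{(k)}|=n-k=n-1\cdot k$.

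For $|H|=s\geq 2$ I would argue by induction on $s$. Write $H=H'\cup\{h^*\}$ with $h^*=\max H$ and $|H'|=s-1$, and set $D_{H'}:=\bigcap_{h\in H'}\spn(X_h\cup Y_j^{(k)})$. Since $\spn Y_j^{(k)}\subseteq D_{H'}$, the dimension formula gives
\[
\dim\bigcap_{h\in H}\spn(X_h\cup Y_j^{(k)})=\dim D_{H'}+\dim\spn(X_{h^*}\cup Y_j^{(k)})-\dim(D_{H'}+\spn X_{h^*}).
\]
Conditioned on the events that $(H',Y_j^{(k)})$ is not bad (so $\dim D_{H'}\leq n-(s-1)k$) and that $X_{h^*}\cup Y_j^{(k)}$ is linearly independent (so $\dim\spn(X_{h^*}\cup Y_j^{(k)})=n-k$), the event ``$(H,Y_j^{(k)})$ bad'' is equivalent to the condition that $\spn X_{h^*}$ adds strictly fewer than $(s-1)k$ new dimensions to $D_{H'}$.

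To bound this, I would expose the random bases in the following order: first $B_{n'+1},\dots,B_n$ (which determines $Y_j^{(k)}$); next, for each $i\leq n'$ expose $b_{i,1},\dots,b_{i,h^*-1}$ (which determines $X_h$ for every $h\in H'\subseteq\{1,\dots,h^*-1\}$, hence also $D_{H'}$); and finally expose $b_{1,h^*},\dots,b_{n',h^*}$ comprising $X_{h^*}$. Once $D_{H'}$ is fixed, the vectors $b_{1,h^*},\dots,b_{n',h^*}$ are mutually independent (they come from distinct bases $B_1,\dots,B_{n'}$) and each obeys Lemma~\ref{lemma-new-vector-in-subspace} (since $h^*\leq m\leq\lceil n/2\rceil$); in particular, for any proper subspace $V\subsetneq\F^n$ one has $\Pr[b_{i,h^*}\in V\mid\mathrm{history}]\leq\alpha_1$. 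Setting $V_i:=D_{H'}+\spn(b_{1,h^*},\dots,b_{i,h^*})$, the bad condition becomes ``fewer than $(s-1)k$ of the indices $i\in\{1,\dots,n'\}$ satisfy $b_{i,h^*}\notin V_{i-1}$''. Since each such failure has conditional probability at most $\alpha_1<1$, and since $(s-1)k\leq LK$ is a constant, a straightforward union bound yields
\[
\Pr\bigl[\text{fewer than }(s-1)k\text{ successes in }n'\text{ trials}\bigr]\leq\binom{n'}{(s-1)k-1}\cdot\alpha_1^{n'-(s-1)k+1}=\exp(-\Omega(n)),
\]
which is comfortably smaller than $n^{-L-1}$. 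Combined with the inductive bound on $\Pr[(H',Y_j^{(k)})\textnormal{ bad}]$, and with a separate (likewise exponentially small) bound on the probability that $X_{h^*}\cup Y_j^{(k)}$ is linearly dependent (which follows by applying Lemma~\ref{lemma-new-vector-in-subspace} to one of the $Y_j^{(k)}$ vectors and taking a union bound), this gives the desired $\Pr[(H,Y_j^{(k)})\textnormal{ bad}]=o(n^{-L-1})$.

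The main obstacle is the careful bookkeeping of various degenerate sub-cases---particularly when some $\spn X_h$ has dimension less than $n'$, or when $X_{h^*}\cup Y_j^{(k)}$ fails to be linearly independent---and ensuring that the inductive union-bounds remain consistent without circularity. Each such degenerate event has exponentially small probability via Lemma~\ref{lemma-new-vector-in-subspace} applied at the appropriate exposure step, but they must all be folded into the overall union bound at each level of the induction.
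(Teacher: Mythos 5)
Your proposal follows essentially the same route as the paper's proof: union bound over the $O(n^{L+1})$ triples $(H,j,k)$, reduce to the case where the sub-pair on $H\setminus\{h^*\}$ is not bad (you do this by induction on $|H|$, the paper by passing to ``minimal'' bad pairs --- logically equivalent), expose $X_{h^*}$ last one vector at a time, observe that badness forces all but at most $(s-1)k-1\leq LK-1$ of the $n'$ vectors $b_{i,h^*}$ to fall into the current proper subspace $V_{i-1}$, and union-bound over the $n^{O(1)}$ choices of exceptional indices with an $\alpha_1^{n'-O(1)}$ factor from Lemma \ref{lemma-new-vector-in-subspace}. The core computation and the final bound are the same.

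One claim in your writeup is wrong, though fortunately unnecessary. You condition on $X_{h^*}\cup Y_j^{(k)}$ being linearly independent and assert that the complementary event has exponentially small probability. It does not: exposing the vectors of $Y_j^{(k)}$ after $X_{h^*}$, the last vector $b_{n-k,j}$ falls into the span of the preceding $n-k-1$ vectors (a subspace of codimension as small as $k+1$) with probability that Lemma \ref{lemma-new-vector-in-subspace} only bounds by $\alpha_{k+1}$, a constant depending on $c$ and $k$; and this probability genuinely is of constant order in natural examples (e.g.\ $T=\F_2^n$). Since your final estimate adds this term to the union bound, the proof as written does not give $o(n^{-L-1})$. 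The repair is to drop the conditioning entirely: you only need the inequality $\dim\spn(X_{h^*}\cup Y_j^{(k)})\leq n-k$, which holds deterministically because the multi-set has $n-k$ elements. With $U=D_{H'}$ and $V=\spn(X_{h^*}\cup Y_j^{(k)})$, the dimension formula then gives $\dim(U\cap V)\leq \dim D_{H'}+(n-k)-\dim(D_{H'}+\spn X_{h^*})=n-k-d$, so badness still implies $d<(s-1)k$, which is the only direction you use. This is exactly how the paper argues; with that correction your proof is complete.
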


We postpone the proof of Lemma \ref{lem-no-bad-set} to Section \ref{sect-lemmas}, since it is somewhat technical.

Our next lemma is similar to Lemma \ref{lem-prob-edge}, but instead of a single set $X_h$ we consider an $L$-tuple of such sets. We would like to show that for each $j$ with probability at least $1-\delta$ at least one of the $L$ corresponding sets $X_h\cup Y_j$ is linearly independent. This is roughly true, but we also have to take into account the possibility of having bad configurations as in Definition \ref{defi-bad}.

\begin{lemma}\label{lem-prob-L-tuple}
Let $H\su \{1,\dots,m\}$ be a subset of size $|H|=L$, and let $j\in \{1,\dots,m\}$. Let us fix any outcomes of $X_h=\{b_{1,h},\dots,b_{n',h}\}$ for all $h\in H$, such that for each $h\in H$ the multi-set $X_h$ is linearly independent. Furthermore, consider any fixed outcomes of the vectors in the sets $Y_1,\dots,Y_{j-1}$, i.e.\ of the vectors $b_{i,1},\dots,b_{i,j-1}$ for all $i=n'+1,\dots,n$. Then, conditional on the outcomes of the vectors in $Y_1,\dots,Y_{j-1}$, subject to the randomness of  $Y_j=\{b_{n'+1,j},\dots,b_{n,j}\}$, with probability at least $1-\delta$ at least one of the following properties is satisfied:
\begin{itemize}
\item[(a)] $X_h\cup Y_j$ is linearly independent for at least one $h\in H$.
\item[(b)] $(H',j)$ is bad for some non-empty subset $H'\su H$.
\end{itemize}
\end{lemma}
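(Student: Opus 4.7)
The plan is to split the exposure of $Y_j=\{b_{n'+1,j},\dots,b_{n,j}\}$ into a long first phase exposing $b_{n'+1,j},\dots,b_{n-K,j}$ (which together form $Y_j^{(K)}$) and a short second phase exposing the remaining $K$ vectors $b_{n-K+1,j},\dots,b_{n,j}$. I will bound $\Pr[\neg(\mathrm{a})\cap\neg(\mathrm{b})]$ by $\delta/2+\delta/2=\delta$, handling Phase~1 by a direct union bound and Phase~2 by an application of Lemma~\ref{lem-auxiliary}.

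For Phase~1, let $A$ denote the event that for every $h\in H$ the set $X_h\cup Y_j^{(K)}$ is linearly independent. For a fixed $h$, at step $\ell$ the already-exposed vectors together with $X_h$ span a subspace of dimension at most $n'+\ell-1$, so Lemma~\ref{lemma-new-vector-in-subspace} bounds the probability that $b_{n'+\ell,j}$ lies in it by $\alpha_{n-n'-\ell+1}$. As $\ell$ runs over $1,\dots,n-n'-K$, the index $k=n-n'-\ell+1$ runs through $\{K+1,\dots,n-n'\}$, so Phase~1 fails for this fixed $h$ with probability at most $\sum_{k\ge K}\alpha_k\le \delta/(2L)$ by~(\ref{eq-summing-up-alpha}). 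A union bound over the $L$ elements of $H$ yields $\Pr[A^c]\le \delta/2$.

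For Phase~2, I condition on any outcomes of the Phase~1 vectors lying in $A$ and set $Z_\ell=b_{n-K+\ell,j}$ for $\ell=1,\dots,K$. Define the events
\[\hat{\mathcal{E}}_h^{(\ell)}:=\bigl\{Z_\ell\in\spn(X_h\cup Y_j^{(K-\ell+1)})\bigr\}\cap\bigcap_{\emptyset\ne H''\su H}\bigl\{(H'',Y_j^{(K-\ell+1)})\text{ is not bad}\bigr\},\]
each of which depends only on the Phase~1 vectors together with $Z_1,\dots,Z_\ell$. Two observations are key: on $\neg(\mathrm{b})$ the non-badness clause is automatic, so $\hat{\mathcal{E}}_h^{(\ell)}$ reduces to $\{Z_\ell\in\spn(X_h\cup Y_j^{(K-\ell+1)})\}$; moreover, on $A\cap\neg(\mathrm{a})$ the linear dependence of $X_h\cup Y_j$ must be witnessed by a Phase~2 vector, so for each $h\in H$ some $\hat{\mathcal{E}}_h^{(\ell)}$ with $\ell\in\{1,\dots,K\}$ holds. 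Hence $A\cap\neg(\mathrm{a})\cap\neg(\mathrm{b})$ is contained in the event that for every $h\in H$ some $\hat{\mathcal{E}}_h^{(\ell)}$ holds.

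I now apply Lemma~\ref{lem-auxiliary} with $\beta_\ell=\alpha_{K-\ell+1}$. The required conditional bound $\Pr[\bigcap_{h\in H'}\hat{\mathcal{E}}_h^{(\ell)}\mid Z_1,\dots,Z_{\ell-1}]\le \alpha_{K-\ell+1}^{|H'|}$ holds uniformly in the outcomes of $Z_1,\dots,Z_{\ell-1}$: on outcomes where some $(H'',Y_j^{(K-\ell+1)})$ is bad the non-badness clause in $\hat{\mathcal{E}}_h^{(\ell)}$ already fails for every $h$ and the conditional probability is zero, while on the remaining outcomes Definition~\ref{defi-bad} gives $\dim\bigl(\bigcap_{h\in H'}\spn(X_h\cup Y_j^{(K-\ell+1)})\bigr)\le n-(K-\ell+1)|H'|$, and combining Lemma~\ref{lemma-new-vector-in-subspace} with Lemma~\ref{lem-alpha-powers} produces the bound. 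Lemma~\ref{lem-auxiliary} then yields $(1-\prod_{k=1}^{K}(1-\alpha_k))^L\le (1-3\delta)^L\le \delta/2$, using $\prod_{k=1}^\infty(1-\alpha_k)=3\delta$ from~(\ref{eq-defi-delta}) and the choice of $L$ in~(\ref{eq-def-L}); averaging over the Phase~1 outcomes and combining with the Phase~1 estimate finishes the proof. The main obstacle is that Lemma~\ref{lem-auxiliary} demands its conditional probability bound uniformly over \emph{all} outcomes of the earlier $Z$'s, whereas our dimension estimate is only available when the current partial configuration is not bad; absorbing the non-badness requirement into the events $\hat{\mathcal{E}}_h^{(\ell)}$ is the trick that reconciles the two.
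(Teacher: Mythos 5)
Your proposal is correct and follows essentially the same route as the paper's proof: the same split into the first $n-n'-K$ exposure steps (handled by a union bound via Lemma \ref{lemma-new-vector-in-subspace} and (\ref{eq-summing-up-alpha})) and the last $K$ steps (handled by Lemma \ref{lem-auxiliary} with $\beta_\ell=\alpha_{K+1-\ell}$), including the key device of building the non-badness requirement into the events $\mathcal{E}_h^{(\ell)}$ so that the conditional bound holds uniformly. The only cosmetic difference is that you phrase the first part as conditioning on the event $A$ that each $X_h\cup Y_j^{(K)}$ is independent, while the paper bounds the same two sub-events directly by a union bound; the estimates are identical.
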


We also postpone the proof of Lemma \ref{lem-prob-L-tuple} to Section \ref{sect-lemmas}. Similarly to the deduction of Lemma \ref{lem-high-degrees-left} from Lemma \ref{lem-prob-edge}, Lemma \ref{lem-prob-L-tuple} and the Chernoff bound imply the following.

\begin{lemma}\label{lem-L-tupe-Chernoff}
With probability $1-o(1)$, the following holds: For every subset $H\su \{1,\dots,m\}$ of size $|H|=L$, there are at least $(1-2\delta)m$ different $j\in\{1,\dots,m\}$ which satisfy property (a) or property (b) in Lemma \ref{lem-prob-L-tuple}.
\end{lemma}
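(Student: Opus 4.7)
The plan is to mirror the derivation of Lemma \ref{lem-high-degrees-left} from Lemma \ref{lem-prob-edge}, but now with Lemma \ref{lem-prob-L-tuple} playing the role of Lemma \ref{lem-prob-edge} and with an additional union bound over subsets $H\su\{1,\dots,m\}$ of size $L$. First, I would expose the bases $B_1,\dots,B_{n'}$, which determines the sets $X_1,\dots,X_m$. By Lemma \ref{lem-no-isolated-vert}, with probability $1-o(1)$ all of $X_1,\dots,X_m$ are linearly independent, so the hypothesis on the $X_h$'s in Lemma \ref{lem-prob-L-tuple} will be satisfied; we may condition on such an outcome.

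Next, fix a subset $H\su\{1,\dots,m\}$ of size $|H|=L$ and expose $Y_1,Y_2,\dots,Y_m$ in this order. For each $j$, the event ``neither (a) nor (b) holds for $(H,j)$'' is determined by $Y_1,\dots,Y_j$, since (a) depends only on the fixed $X_h$'s and $Y_j$, while (b) depends on $Y_j^{(1)},\dots,Y_j^{(K)}$, all of which are determined by $Y_j$. By Lemma \ref{lem-prob-L-tuple}, conditional on any outcomes of $Y_1,\dots,Y_{j-1}$, this ``bad'' event has probability at most $\delta$. Hence the number of bad indices $j\in\{1,\dots,m\}$ is stochastically dominated by a random variable $Z\sim\operatorname{Bin}(m,\delta)$, and the Chernoff bound (see e.g.\ \cite[Theorem A.1.4]{alon-spencer}) yields
\[\Pr[Z>2\delta m]\leq \exp\!\left(-2\cdot\frac{(\delta m)^2}{m}\right)=\exp(-2\delta^2 m)\leq \exp(-\delta^2(n-1)),\]
using $m\geq\lfloor n/2\rfloor$. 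In other words, for this fixed $H$, the number of $j\in\{1,\dots,m\}$ satisfying (a) or (b) is at least $(1-2\delta)m$ except with probability $e^{-\Omega(n)}$.

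Finally, a union bound over the at most $\binom{m}{L}\leq n^L$ choices of $H$ gives a total failure probability of at most $n^L\cdot \exp(-\delta^2(n-1))=o(1)$, since $L$ and $\delta$ depend only on $c$. Combined with the $o(1)$ probability that some $X_h$ fails to be linearly independent in the initial conditioning step, this establishes the lemma.

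The argument is essentially routine once Lemma \ref{lem-prob-L-tuple} is available; I do not anticipate real technical difficulty. The only point deserving care is verifying that the hypotheses of Lemma \ref{lem-prob-L-tuple} apply \emph{uniformly} over all possible histories $Y_1,\dots,Y_{j-1}$, which is needed so that the indicators $F_j$ of being bad genuinely dominate to an i.i.d.\ $\operatorname{Bin}(m,\delta)$ sequence rather than merely having the right marginal probabilities. Fortunately, Lemma \ref{lem-prob-L-tuple} is stated precisely as a conditional bound that holds for every fixed outcome of $Y_1,\dots,Y_{j-1}$, so this stochastic domination is immediate.
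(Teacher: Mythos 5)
Your proposal is correct and follows essentially the same route as the paper: condition on the outcome of Lemma \ref{lem-no-isolated-vert}, use the uniform conditional bound of Lemma \ref{lem-prob-L-tuple} to get stochastic domination by a binomial (you dominate the count of bad $j$ by $\operatorname{Bin}(m,\delta)$, the paper equivalently dominates the good count from below by $\operatorname{Bin}(m,1-\delta)$), apply Chernoff, and union bound over the at most $m^L$ choices of $H$.
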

\begin{proof}
First, expose the outcomes of $X_1,\dots,X_m$ (i.e.\ the outcomes of the bases $B_1,\dots,B_{n'}$). By Lemma \ref{lem-no-isolated-vert}, with probability $1-o(1)$, each of the multi-sets $X_1,\dots,X_m$ is linearly independent. So let us condition on any such outcome for $X_1,\dots,X_m$.

Now it suffices to prove that (subject to the randomness of $Y_1,\dots,Y_m$) for every subset $H\su \{1,\dots,m\}$ of size $|H|=L$, the probability that there are fewer than $(1-2\delta)m$ different $j\in\{1,\dots,m\}$ satisfying property (a) or (b) in Lemma \ref{lem-prob-L-tuple} is at most $o(1/n^L)$. Indeed, then by the union bound the probability that for some subset $H\su \{1,\dots,m\}$ of size $|H|=L$ there are fewer than $(1-2\delta)m$ such $j$ is at most $m^L\cdot o(1/n^L)=o(1)$.

So let us now consider some $H\su \{1,\dots,m\}$ of size $|H|=L$. By Lemma \ref{lem-prob-L-tuple}, for each $j\in\{1,\dots,m\}$ with probability at least $1-\delta$ the outcome of $Y_j$ is such that (a) or (b) is satisfied, even when conditioning on any outcomes of $Y_1,\dots,Y_{j-1}$. This means that the number of different $j\in\{1,\dots,m\}$ satisfying (a) or (b) is a random variable which stochastically dominates a binomial random variable $Z\sim \operatorname{Bin}(m,1-\delta)$. Thus, the probability that there are fewer than $(1-2\delta)m$ different $j$ satisfying (a) or (b) is at most
\[\Pr[Z<(1-2\delta)m]\leq \exp\left(-2\cdot \frac{((1-\delta)m-(1-2\delta) m)^2}{m}\right)=e^{-2\delta^2 m} \leq e^{-\delta^2\cdot (n-1)}=o(1/n^L),\]
as desired. Here, we again used the Chernoff bound (see e.g.\ \cite[Theorem A.1.4]{alon-spencer}) and $m\in \{\lfloor n/2\rfloor,\lceil n/2\rceil\}$.
\end{proof}

Now, using Lemmas \ref{lem-no-bad-set} and \ref{lem-L-tupe-Chernoff}, we can show that with high probability, for any $L$ distinct vertices on the left side of $G$, the union of their neighborhoods is large.

\begin{lemma}\label{lem-L-tuple-nbhd}
With probability $1-o(1)$, the graph $G$ satisfies the following condition: For any $L$ distinct vertices on the left, the union of their neighborhoods has size at least $(1-2\delta)m$.
\end{lemma}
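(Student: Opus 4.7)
The plan is to combine Lemma \ref{lem-no-bad-set} with Lemma \ref{lem-L-tupe-Chernoff} via a union bound. By those two lemmas, with probability $1-o(1)$ the following two events occur simultaneously:
\begin{itemize}
\item[(i)] For every subset $H' \subseteq \{1,\dots,m\}$ with $1 \leq |H'| \leq L$ and every $j \in \{1,\dots,m\}$, the pair $(H',j)$ is not bad.
\item[(ii)] For every subset $H \subseteq \{1,\dots,m\}$ of size $|H| = L$, there are at least $(1-2\delta)m$ different $j \in \{1,\dots,m\}$ which satisfy property (a) or property (b) of Lemma \ref{lem-prob-L-tuple}.
\end{itemize}

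I claim that whenever both (i) and (ii) hold, the conclusion of the lemma is satisfied. To see this, fix any $L$ distinct vertices on the left, corresponding to a subset $H \subseteq \{1,\dots,m\}$ with $|H|=L$. By (ii), there are at least $(1-2\delta)m$ values of $j \in \{1,\dots,m\}$ for which (a) or (b) holds. For each such $j$, property (b) would require $(H',j)$ to be bad for some non-empty $H' \subseteq H$; but $|H'| \leq L$, so (i) rules this out. Therefore (a) must hold, meaning that there exists $h \in H$ with $X_h \cup Y_j$ linearly independent, which by the definition of $G$ gives an edge between vertex $h$ on the left and vertex $j$ on the right. Hence each such $j$ lies in the neighborhood of some vertex of $H$, and the union of the neighborhoods of the vertices in $H$ has size at least $(1-2\delta)m$.

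Since (i) and (ii) each hold with probability $1-o(1)$, their intersection holds with probability $1-o(1)$ by a union bound, completing the proof. There is no real obstacle here: the technical work is already absorbed into Lemmas \ref{lem-no-bad-set} and \ref{lem-L-tupe-Chernoff}, and the deduction is just the observation that ruling out ``bad'' configurations allows us to replace the alternative (a)-or-(b) in Lemma \ref{lem-prob-L-tuple} by the desired alternative (a) alone.
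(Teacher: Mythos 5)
Your proposal is correct and is essentially identical to the paper's own proof: both combine Lemma \ref{lem-no-bad-set} and Lemma \ref{lem-L-tupe-Chernoff} via a union bound, observe that the absence of bad pairs rules out alternative (b), and conclude that alternative (a) holds for at least $(1-2\delta)m$ values of $j$, which is exactly the neighborhood-union statement.
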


\begin{proof}
By Lemma \ref{lem-no-bad-set}, with probability $1-o(1)$ none of the pairs $(H',j)$ for any subset $H'\su\{1,\dots,m\}$ of size $|H'|\leq L$ and any $j\in \{1,\dots,m\}$ are bad. Furthermore, with probability $1-o(1)$  the statement in Lemma \ref{lem-L-tupe-Chernoff} holds. But if there are no bad pairs $(H',j)$, then there cannot be any subset $H\su\{1,\dots,m\}$ of size $|H|= L$ and any $j\in \{1,\dots,m\}$ satisfying property (b) in Lemma \ref{lem-prob-L-tuple}. Thus, with probability $1-o(1)$, for every subset $H\su \{1,\dots,m\}$ of size $|H|=L$ there are at least $(1-2\delta)m$ different $j\in\{1,\dots,m\}$ satisfying property (a) in Lemma \ref{lem-prob-L-tuple}. But this precisely means that with probability $1-o(1)$ for every set $H$ of $L$ distinct vertices on the left of $G$ there are at least $(1-2\delta)m$ different vertices $j$ on the right that are in the union of the neighborhoods of the vertices in $H$ on the left.
\end{proof}

Combining Lemmas \ref{lem-high-degrees-left}, \ref{lem-high-degrees-right} and \ref{lem-L-tuple-nbhd}, it is now not hard to show that $G$ satisfies the condition in Halls' marriage theorem with high probability.

\begin{lemma}\label{lem-Hall-condition}
With probability $1-o(1)$, the graph $G$ satisfies the following  condition: For any set $H$ of vertices on the left, the union of their neighborhoods has size at least $|H|$.
\end{lemma}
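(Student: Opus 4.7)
The plan is to combine the three high-probability events from Lemmas \ref{lem-high-degrees-left}, \ref{lem-high-degrees-right}, and \ref{lem-L-tuple-nbhd} via a union bound — since each holds with probability $1-o(1)$, all three hold simultaneously with probability $1-o(1)$. It then suffices to show deterministically that, whenever these three events hold, Hall's condition is satisfied, i.e.\ that $|N(H)|\geq |H|$ for every non-empty $H\su\{1,\dots,m\}$ on the left. I will do a case analysis on $|H|$, splitting into three ranges. Since $L$ is a fixed constant (depending only on $c$) and $m\geq \lfloor n/2\rfloor$, for $n$ sufficiently large we have $2\delta m\geq L$, so the ranges $|H|\leq 2\delta m$, $\,2\delta m < |H|\leq (1-2\delta)m$, and $|H|>(1-2\delta)m$ together cover all possibilities.

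In the small-$H$ regime $|H|\leq 2\delta m$, picking any single vertex $h\in H$ and applying Lemma \ref{lem-high-degrees-left} gives $|N(H)|\geq |N(\{h\})|\geq 2\delta m\geq |H|$. In the medium-$H$ regime $2\delta m < |H|\leq (1-2\delta)m$, the inequality $|H|>2\delta m\geq L$ lets me choose $L$ distinct vertices in $H$ and apply Lemma \ref{lem-L-tuple-nbhd} to conclude $|N(H)|\geq (1-2\delta)m\geq |H|$.

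In the large-$H$ regime $|H|>(1-2\delta)m$, I would argue by contradiction. If some vertex $j$ on the right is not in $N(H)$, then all neighbors of $j$ must lie in $\{1,\dots,m\}\sm H$, a set of size strictly less than $2\delta m$; this contradicts the degree lower bound of $2\delta m$ given by Lemma \ref{lem-high-degrees-right}. Hence $N(H)=\{1,\dots,m\}$ and $|N(H)|=m\geq |H|$. There is really no obstacle here: the three ingredient lemmas were designed precisely to handle the three regimes of $|H|$, so the proof is just a matter of writing out the case split carefully and checking that the boundary values line up for large $n$.
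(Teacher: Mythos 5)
Your proposal is correct and follows essentially the same argument as the paper: a union bound over the three high-probability events, followed by the same three-range case split on $|H|$ (using Lemma \ref{lem-high-degrees-left} for small $H$, Lemma \ref{lem-L-tuple-nbhd} for the middle range via $|H|>2\delta m\geq L$, and the right-side degree bound for large $H$). Your contrapositive phrasing of the large-$H$ case correctly invokes Lemma \ref{lem-high-degrees-right}, which is in fact the lemma the paper's argument relies on there.
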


\begin{proof}
Since we are proving an asymptotic statement, we may assume that $n$ (and hence $m$) is sufficiently large with respect to the fixed value of $c$, and in particular that $2\delta m\geq L$ (recall that $\delta$ and $L$ only depend on $c$).

With probability $1-o(1)$, the graph $G$ satisfies the properties in Lemmas \ref{lem-high-degrees-left}, \ref{lem-high-degrees-right} and \ref{lem-L-tuple-nbhd}. We will show that whenever these properties are satisfied, the graph also satisfies the condition in Lemma \ref{lem-Hall-condition}.

So consider some subset $H$ of the $m$ vertices on the left of $G$. We need to show that the union of the neighborhoods of these vertices has size at least $|H|$, assuming that $G$ satisfies the properties in Lemmas \ref{lem-high-degrees-left}, \ref{lem-high-degrees-right} and \ref{lem-L-tuple-nbhd}.

If $|H|=0$, this is trivially true. If $1\leq |H|\leq 2\delta m$, then the union of the neighborhoods of the vertices in $H$ has size at least $2\delta m\geq |H|$, since by the property in Lemma \ref{lem-high-degrees-left} each of these neighborhoods already has size at least $2\delta m$ individually. If $2\delta m< |H|\leq (1-2\delta) m$, then in particular $|H|> 2\delta m\geq L$ and so by the property in Lemma \ref{lem-L-tuple-nbhd} the union of the neighborhoods of the vertices in $H$ has size at least $(1-2\delta) m\geq |H|$. Finally, if $(1-2\delta) m< |H|\leq m$, then we claim that all $m\geq |H|$ vertices on the right of $G$ must be in the union of the neighborhoods of the vertices in $H$. Indeed, by the property in Lemma \ref{lem-high-degrees-left} each vertex on the right of $G$ has degree at least $2\delta m$ and so it must be adjacent to at least one vertex in $H$ since $|H|>(1-2\delta) m$.
\end{proof}

Whenever the bipartite graph $G$ satisfies the condition in Lemma \ref{lem-Hall-condition}, then by Hall's marriage theorem it has a perfect matching. Thus, by Lemma \ref{lem-Hall-condition}, $G$ has a perfect matching with probability $1-o(1)$. This proves Proposition \ref{propo-1}.

\section{Proof of Lemmas \ref{lem-no-bad-set}, \ref{lem-prob-L-tuple} and \ref{lem-auxiliary}}
\label{sect-lemmas}

In this section, we prove the lemmas whose proofs we previously postponed. First, we prove Lemma \ref{lem-no-bad-set}

\begin{proof}[Proof of Lemma \ref{lem-no-bad-set}]
We need to prove that with probability $1-o(1)$ none of the pairs $(H,Y_j^{(k)})$ for $H\su \{1,\dots,m\}$ of size $1\leq |H|\leq L$, and $j\in \{1,\dots,m\}$ and $k\in \{1,\dots,K\}$ is bad. Let us say that a bad pair $(H,Y_j^{(k)})$ is a minimal, if there is no bad pair $(H',Y_j^{(k)})$ with $\varnothing \subsetneq H'  \subsetneq H$. Note that whenever $(H,Y_j^{(k)})$ is a bad pair, there is some non-empty subset $H'\su H$ such that $(H',Y_j^{(k)})$ is a minimal bad pair.

Thus, it suffices to prove that with probability $1-o(1)$ none of the pairs $(H,Y_j^{(k)})$ for $H\su \{1,\dots,m\}$ of size $1\leq |H|\leq L$, and $j\in \{1,\dots,m\}$ and $k\in \{1,\dots,K\}$ is a minimal bad pair. There are at most $m^L\cdot m\cdot K\leq K\cdot n^{L+1}=O(n^{L+1})$ choices for $H$, $j$ and $k$. Hence it suffices to prove that for any given choice of $H\su \{1,\dots,m\}$ (of size $1\leq |H|\leq L$) and $j\in \{1,\dots,m\}$ and $k\in \{1,\dots,K\}$, the pair $(H,Y_j^{(k)})$ is a minimal bad pair with probability at most $o(1/n^{L+1})$. 

So consider a subset $H\su \{1,\dots,m\}$ of size $1\leq |H|\leq L$, and $j\in \{1,\dots,m\}$ and $k\in \{1,\dots,K\}$. Let $h\in H$ be the largest element of $H$. First, consider the case that $|H|=1$, i.e.\ $H=\{h\}$. We claim that in this case the pair $(H,Y_j^{(k)})$  is never bad (and so in particular never a minimal bad pair). Indeed, we always have
\[\dim\left(\spn(X_h\cup Y_j^{(k)})\right)\leq n-k=n-k\cdot |\{h\}|,\]
since $X_h\cup Y_j^{(k)}$ only consists of $n'+(n-n'-k)=n-k$ vectors. Hence the pair $(H,Y_j^{(k)})$  is never bad if $|H|=1$. So let us from now on assume that $|H|\geq 2$.

Let us expose $Y_j^{(k)}$, as well as $X_1,\dots,X_{h-1}$,. We will show that conditioned on any outcomes for $Y_j^{(k)}$ and $X_1,\dots,X_{h-1}$, subject to the randomness of $X_h$, the pair $(H,Y_j^{(k)})$ is a minimal bad pair with probability at most $o(1/n^{L+1})$. Note that now we already exposed $X_{h'}$ for all $h'\in H\sm\{h\}$ (and note that $H\sm\{h\}\neq \varnothing$ by our assumption that $|H|\geq 2$). If $(H\sm\{h\},Y_j^{(k)})$ is a bad pair, then $(H,Y_j^{(k)})$ cannot be a minimal bad pair. So let us assume that the $(H\sm\{h\},Y_j^{(k)})$ is not bad. Then we have
\[\dim\left(\bigcap_{h'\in H\sm\{h\}}\spn(X_{h'}\cup Y_j^{(k)})\right)\leq n-k\cdot (|H|-1)\]
For simplicity of notation, let $W$ be the space given by the intersection on the left-hand side. Then $\dim W\leq n-k\cdot (|H|-1)$. It suffices to show that the probability of having
\begin{equation}\label{eq-proof-no-bad-pairs}
\dim(W\cap \spn(X_{h}\cup Y_j^{(k)}))>n-k\cdot |H|
\end{equation}
is at most $o(1/n^{L+1})$. Indeed, this would mean that the probability of $(H,Y_j^{(k)})$ being a bad pair, and hence in particular the probability of $(H,Y_j^{(k)})$ being a minimal bad pair, is at most $o(1/n^{L+1})$.

Note that in the case of $\dim W\leq n-k\cdot |H|$, we always have $\dim(W\cap \spn(X_{h}\cup Y_j^{(k)}))\leq \dim W\leq n-k\cdot |H|$, so  (\ref{eq-proof-no-bad-pairs}) never holds. Hence we may from now on assume that
\[n-k\cdot |H|<\dim W\leq n-k\cdot (|H|-1).\]

Note that whenever (\ref{eq-proof-no-bad-pairs}) holds, we must have
\begin{align*}
\dim(W+\spn(X_{h}\cup Y_j^{(k)}))&=\dim(W)+\dim(\spn(X_{h}\cup Y_j^{(k)}))-\dim(W\cap \spn(X_{h}\cup Y_j^{(k)}))\\
&<(n-k\cdot (|H|-1))+(n-k)-(n-k\cdot |H|)=n
\end{align*}
and therefore
\[\dim(W+\spn(X_{h}))=\dim(W+\spn(Y_j^{(k)})+\spn(X_{h}))=\dim(W+\spn(X_{h}\cup Y_j^{(k)}))<n.\]
So it suffices to show that the probability of having $\dim(W+\spn(X_{h}))<n$ (conditioned on the outcomes of $Y_j^{(k)}$ and $X_1,\dots,X_{h-1}$ and subject to the randomness of $X_h$) is at most $o(1/n^{L+1})$.
 
Recall that $W$ is determined by the outcomes of $Y_j^{(k)}$ and $X_1,\dots,X_{h-1}$ that we are conditioning on. Let us now expose $X_h=\{b_{1,h},\dots,b_{n',h}\}$ one vector at a time in this order. At every step $i=1,\dots,n'$, consider the dimension $\dim(W+\spn(b_{1,h},\dots,b_{i,h}))$. By our assumption that $\dim W>n-k\cdot |H|$, this dimension can increase for at most $k\cdot |H|\leq K\cdot L$ steps. Let $I\su \{1,\dots,n'\}$ be the set of indices where the dimension at step $i$ is larger than the dimension at step $i-1$, i.e.\ where $\dim(W+\spn(b_{1,h},\dots,b_{i,h}))>\dim(W+\spn(b_{1,h},\dots,b_{i-1,h}))$. Then $|I|\leq K\cdot L$ and hence  there are at most $(n')^{KL}\leq n^{KL}$ possibilities for the set $I$. If $\dim(W+\spn(X_{h}))<n$, then for every $i\in \{1,\dots,n'\}\sm I$ we must have
\[\dim(W+\spn(b_{1,h},\dots,b_{i,h}))=\dim(W+\spn(b_{1,h},\dots,b_{i-1,h}))\leq \dim(W+\spn(X_{h}))<n.\]
Hence, if $\dim(W+\spn(X_{h}))<n$, then for every $i\in \{1,\dots,n'\}\sm I$ the random vector $b_{i,h}$ must be in the subspace $\dim(W+\spn(b_{1,h},\dots,b_{i-1,h}))$ (which only depends on the previously exposed vectors) and this subspace must have dimension at most $n-1$. For each $i\in \{1,\dots,n'\}\sm I$, conditioned on the previously exposed vectors, the probability that this happens is at most $\alpha_1$ by Lemma \ref{lemma-new-vector-in-subspace} and (\ref{eq-definition-alpha}). Hence the probability to have this property for all $i\in \{1,\dots,n'\}\sm I$ is at most
\[\alpha_1^{n'-|I|}\leq \alpha_1^{\lfloor n/2\rfloor-KL}=o(1/n^{KL+L+1})\]
(here, we used that $0<\alpha_1<1$ only depends on our fixed value of $c$). Since there are at most $n^{KL}$ possibilities for $I$, we can conclude that the total probability of having $\dim(W+\spn(X_{h}))<n$ (conditioned on the outcomes of $Y_j^{(k)}$ and $X_1,\dots,X_{h-1}$) is at most $n^{KL}\cdot o(1/n^{KL+L+1})=o(1/n^{L+1})$, as desired.
\end{proof}

Next, we prove Lemma \ref{lem-prob-L-tuple}.

\begin{proof}[Proof of Lemma \ref{lem-prob-L-tuple}.]
Recall that we fixed outcomes $X_h$ for all $h\in H$ such that for each $h\in H$ the multi-set $X_h$ is linearly independent, and that we are conditioning on fixed outcomes for $Y_1,\dots,Y_{j-1}$. Let us expose the vectors of $Y_j=\{b_{n'+1,j},\dots,b_{n,j}\}$ one vector at a time (these vectors are independent on each other, but for each vector $b_{i,j}$ for $i\in \{n'+1,\dots,n\}$ the probability distribution depends on the fixed outcomes of $b_{i,1},\dots,b_{i,j-1}$ that we are conditioning on). We need to prove that the probability that neither (a) nor (b) holds is at most $\delta$.

If (a) does not hold, then for each $h\in H$, the multi-set $X_h\cup Y_j=X_h\cup \{b_{n'+1,j},\dots,b_{n,j}\}$ must be linearly dependent. So, since each $X_h$ is linearly independent, for each $h\in H$ there must be an index $i\in \{n'+1,\dots,n\}$ such that $b_{i,j}\in \spn(X_h\cup \{b_{n'+1,j},\dots,b_{i-1,j}\})$. 

First, consider the possibility that for some $h\in H$ we have $b_{i,j}\in \spn(X_h\cup \{b_{n'+1,j},\dots,b_{i-1,j}\})$ for an index $i\in \{n'+1,\dots,n-K\}$. For each $h\in H$ and $i\in \{n'+1,\dots,n-K\}$, after having exposed $b_{n'+1,j},\dots,b_{i-1,j}$, the subspace $\spn(X_h\cup \{b_{n'+1,j},\dots,b_{i-1,j}\})$ has dimension at most $i-1$ (since it is spanned by $i-1$ vectors). Hence by Lemma \ref{lemma-new-vector-in-subspace} and (\ref{eq-definition-alpha}), the probability of having $b_{i,j}\in \spn(X_h\cup \{b_{n'+1,j},\dots,b_{i-1,j}\})$ is at most $\alpha_{n-i+1}$. Thus, by a union bound, the overall probability that $b_{i,j}\in \spn(X_h\cup \{b_{n'+1,j},\dots,b_{i-1,j}\})$ for some $h\in H$ and some $i\in \{n'+1,\dots,n-K\}$ is at most
\[|H|\cdot (\alpha_{n-n'}+\alpha_{n-n'+1}+\dots+\alpha_{K+1})< L\cdot (\alpha_K+\alpha_{K+1}+\alpha_{K+2}+\dots)\leq L\cdot\frac{\delta}{2L}=\frac{\delta}{2}\]
by (\ref{eq-summing-up-alpha}).

It remains to bound the probability that for each $h\in H$ there is some $i\in \{n-K+1,\dots,n\}$ such that $b_{i,j}\in \spn(X_h\cup \{b_{n'+1,j},\dots,b_{i-1,j}\})$. More precisely, it suffices to show that the following event $\mathcal{E}$ happens with probability at most $\delta/2$: (b) is not satisfied and for each $h\in H$ there is some $i\in \{n-K+1,\dots,n\}$ such that $b_{i,j}\in \spn(X_h\cup \{b_{n'+1,j},\dots,b_{i-1,j}\})$.

We will show that this event $\mathcal{E}$ happens with probability at most $\delta/2$, even when conditioning on any outcomes of $b_{n'+1,j},\dots,b_{n-K,j}$. So let us expose $b_{n'+1,j},\dots,b_{n-K,j}$ and condition on their outcomes. Now, the only remaining random vectors are $b_{n-K+1,j},\dots,b_{n,j}$ (which we will expose one vector at a time). For every $\ell=1,\dots,K$ and $h\in H$, let $\mathcal{E}_h^{(\ell)}$ denote the event that $b_{n-K+\ell,j}\in \spn(X_h\cup \{b_{n'+1,j},\dots,b_{n-K+\ell-1,j}\})$ and for every non-empty subset $H'\su H$ the pair $(H',Y_j^{(K+1-\ell)})$ is not bad. Now, whenever the event $\mathcal{E}$ happens, for each $h\in H$ there must be some $\ell\in \{1,\dots,K\}$ such that the event $\mathcal{E}_h^{(\ell)}$ happens (indeed, for each $h\in H$ we can take $\ell=i-(n-K)$ for $i$ as in the event $\mathcal{E}$, noting that the pair $(H',Y_j^{(K+1-\ell)})$ is not bad for any non-empty subset $H'\su H$ since (b) is not satisfied).

Thus, it suffices to show that
\begin{equation}\label{eq-to-show-with-aux-lemma}
\Pr\left[\text{for each }h\in H\text{ there is some }\ell\in \{1,\dots,K\}\text{ such that }\mathcal{E}_h^{(\ell)}\text{ holds}\right]\leq \delta/2,
\end{equation}
where the probability is subject only to the randomness $b_{n-K+1,j},\dots,b_{n,j}$ and conditional on the outcomes of all previously exposed vectors. Note that each event $\mathcal{E}_h^{(\ell)}$ only depends on $b_{n-K+1,j},\dots,b_{n-K+\ell,j}$.

Let us apply Lemma \ref{lem-auxiliary} to the random variables $Z_1,\dots,Z_K$ given by $Z_\ell=b_{n-K+\ell,j}$, the events $\mathcal{E}_h^{(\ell)}$ for $h\in H$ and $\ell\in \{1,\dots,K\}$ defined above, and to $\beta_1,\dots,\beta_K$ given by $\beta_\ell=\alpha_{K+1-\ell}$ for $\ell=1,\dots,K$. In order to check the condition in Lemma \ref{lem-auxiliary}, consider any subset $H'\su H$, any $\ell=1,\dots,K$ and any outcomes of $Z_1=b_{n-K+1,j},\dots,Z_{\ell-1}=b_{n-K+\ell-1,j}$. We need to check that
\begin{equation}\label{eq-condition-aux-lemma-check}
\Pr\left[\mathcal{E}_h^{(\ell)}\textnormal{ holds for all }h\in H'\mid b_{n-K+1,j},\dots,b_{n-K+\ell-1,j}\right]\leq (\alpha_{K+1-\ell})^{|H'|}.
\end{equation}
Indeed, first note that this inequality is trivially true if $H'=\varnothing$ (since then the left-hand side is $1$). So let us assume that $H'\su H$ is a non-empty subset. Exposing $b_{n-K+1,j},\dots,b_{n-K+\ell-1,j}$ (and recalling that we are also conditioning on outcomes of the previously exposed vectors) determines  the space
\[\bigcap_{h\in H'}\spn(X_h\cup Y_j^{(K+1-\ell)})=\bigcap_{h\in H'}\spn(X_h\cup \{b_{n'+1,j},\dots,b_{n-K+\ell-1,j}\}).\]
If this space has dimension strictly larger than $n-(K+1-\ell)\cdot |H'|$, then $(H',Y_j^{(K+1-\ell)})$ is bad (by Definition \ref{defi-bad}) and so the event $\mathcal{E}_h^{(\ell)}$ cannot happen (by definition of the event). Hence (\ref{eq-condition-aux-lemma-check}) is also trivially true in this case. So let us assume that the space $\bigcap_{h\in H'}\spn(X_h\cup \{b_{n'+1,j},\dots,b_{n-K+\ell-1,j}\})$ has dimension at most $n-(K+1-\ell)\cdot |H'|$. In order for all of the events $\mathcal{E}_h^{(\ell)}$ for $h\in H'$ to hold, the vector $b_{n-K+\ell,j}$ must be contained in the intersection of $\spn(X_h\cup \{b_{n'+1,j},\dots,b_{n-K+\ell-1,j}\})$ for all $h\in H'$ (again by definition of the events $\mathcal{E}_h^{(\ell)}$). Since this intersection is a space of dimension at most $n-(K+1-\ell)\cdot |H'|$, by Lemma \ref{lemma-new-vector-in-subspace} and (\ref{eq-definition-alpha}) the probability that $b_{n-K+\ell,j}$ is in this intersection is at most
\[\alpha_{(K+1-\ell)\cdot |H'|}\leq (\alpha_{K+1-\ell})^{|H'|}\]
(here, we used Lemma \ref{lem-alpha-powers}). Thus, we have checked the condition (\ref{eq-condition-aux-lemma-check}) in all cases.

So we can indeed apply Lemma \ref{lem-auxiliary}, and conclude that
\begin{align*}
\Pr\left[\text{for each }h\in H\text{ there is some }\ell\in \{1,\dots,K\}\text{ such that }\mathcal{E}_h^{(\ell)}\text{ holds}\right]&\leq \left(1-(1-\beta_1)\dotsm (1-\beta_K)\right)^{|H|}\\
&=\left(1-(1-\alpha_K)\dotsm (1-\alpha_1)\right)^{L}\\
&\leq \left(1-(1-\alpha_1)\cdot (1-\alpha_2)\dotsm \right)^{L}\\
&= (1-3\delta)^{L}\leq \delta/2,
\end{align*}
where in the second-last step we used the definition of $\delta$ in (\ref{eq-defi-delta}) and the last step is by (\ref{eq-def-L}). This proves (\ref{eq-to-show-with-aux-lemma}), as desired.
\end{proof}

Finally, we prove Lemma \ref{lem-auxiliary}.

\begin{proof}[Proof of Lemma \ref{lem-auxiliary}]
We prove the lemma by induction on $K$. For $K=1$, we have 
\begin{multline*}
\Pr\left[\textnormal{for each }h\in H\textnormal{ there is }\ell\in \{1\}\textnormal{ such that }\mathcal{E}_h^{(\ell)}\textnormal{ holds}\right]\\
=\Pr\left[\mathcal{E}_h^{(1)}\textnormal{ holds for all }h\in H\right]\leq (\beta_1)^{|H|}= \left(1-(1-\beta_1)\right)^{|H|},
\end{multline*}
as desired (using the assumption in the lemma statement).

So let us now assume that $K\geq 2$ and that the statement in the lemma holds for $K-1$. Let $T\su H$ be the (random) set
\[T=\{h\in H\mid \textnormal{ there is some }\ell\in \{1,\dots,K-1\}\textnormal{ such that }\mathcal{E}_h^{(\ell)}\textnormal{ holds}\}.\]
Note that $T$ depends only on the outcomes of $Z_1,\dots, Z_{K-1}$ (recall that each of the events $\mathcal{E}_h^{(\ell)}$ for $h\in H$ and $\ell\in \{1,\dots,K-1\}$ depends only on $Z_1,\dots, Z_{K-1}$). Furthermore, note that for every subset $H'\su H$, the induction hypothesis for $K-1$ applied to $H'$ and the events $\mathcal{E}_h^{(\ell)}$ for $h\in H'$ and $\ell=1,\dots,K-1$ gives
\begin{multline}\label{eq-proof-aux-ind-hyp}
\Pr[H'\su T]= \Pr\left[\textnormal{for each }h\in H'\textnormal{ there is }\ell\in \{1,\dots,K-1\}\textnormal{ such that }\mathcal{E}_h^{(\ell)}\textnormal{ holds}\right]\\
\leq \left(1-(1-\beta_1)\dotsm (1-\beta_{K-1})\right)^{|H'|}.
\end{multline}
We have
\begin{multline}\label{eq-lange-gleichung}
\Pr\left[\textnormal{for each }h\in H\textnormal{ there is }\ell\in \{1,\dots,K\}\textnormal{ such that }\mathcal{E}_h^{(\ell)}\textnormal{ holds}\right]\\
=\sum_{J\su H} \Pr[T=J]\cdot \Pr\left[\mathcal{E}_h^{(K)}\text{ holds for all }h\in H\sm J\,\Big\vert\, T=J\right].
\end{multline}
Note that for any $J\su H$ having $T=J$ depends only on the outcomes of $Z_1,\dots, Z_{K-1}$. For any fixed outcomes of $Z_1,\dots, Z_{K-1}$ (and so in particular for any outcomes such that $T=J$ holds), we know by the assumption in the lemma that
\[\Pr\left[\mathcal{E}_h^{(K)}\textnormal{ holds for all }h\in H\sm J\,\Big\vert \,Z_1,\dots,Z_{K-1}\right]\leq (\beta_K)^{|H\sm J|}.\]
Hence the same inequality also holds when condition on $T=J$ instead of conditioning on specific outcomes of $Z_1,\dots, Z_{K-1}$. So for every subset $J\su H$, we can conclude
\begin{align*}
\Pr\left[\mathcal{E}_h^{(K)}\textnormal{ holds for all }h\in H\sm J\,\Big\vert \,T=J\right]&\leq (\beta_K)^{|H\sm J|}\\
&=(\beta_K)^{|H\sm J|}\cdot (\beta_K+(1-\beta_K))^{|J|}\\
&=(\beta_K)^{|H\sm J|}\sum_{H'\su J} (\beta_K)^{|J\sm H'|}\cdot (1-\beta_K)^{|H'|}\\
&=\sum_{H'\su J}(\beta_K)^{|H\sm H'|}\cdot (1-\beta_K)^{|H'|}.
\end{align*}
Combining this with (\ref{eq-lange-gleichung}), we obtain
\begin{align*}
&\Pr\left[\textnormal{for each }h\in H\textnormal{ there is }\ell\in \{1,\dots,K\}\textnormal{ such that }\mathcal{E}_h^{(\ell)}\textnormal{ holds}\right]\\
&\quad\quad\leq \sum_{J\su H} \sum_{H'\su J} \Pr[T=J]\cdot (\beta_K)^{|H\sm H'|} (1-\beta_K)^{|H'|}\\
&\quad\quad= \sum_{H'\su H} \left(\sum_{H'\su J\su H} \Pr[T=J]\right)\cdot (\beta_K)^{|H\sm H'|} (1-\beta_K)^{|H'|}\\
&\quad\quad= \sum_{H'\su H} \Pr[H'\su T]\cdot (\beta_K)^{|H\sm H'|} (1-\beta_K)^{|H'|}\\
&\quad\quad\leq \sum_{H'\su H} \left(1-(1-\beta_1)\dotsm (1-\beta_{K-1})\right)^{|H'|}\cdot (\beta_K)^{|H\sm H'|} (1-\beta_K)^{|H'|}\\
&\quad\quad=  \Big(\left(1-(1-\beta_1)\dotsm (1-\beta_{K-1})\right)\cdot (1-\beta_K)+\beta_K\Big)^{|H|}\\
&\quad\quad=  \left(1-(1-\beta_1)\dotsm (1-\beta_K)\right)^{|H|},
\end{align*}
where for the second inequality sign we used (\ref{eq-proof-aux-ind-hyp}).
\end{proof}

\section{Concluding remarks}
\label{sect-concluding-remarks}

The most obvious open question is, of course, to prove Rota's basis conjecture (either for vector spaces as stated in Conjecture \ref{conjecture-rota}, or more generally for matroids). But there are also still many interesting questions concerning the conjecture for random bases in various families of matroids.

Graphic matroids are an important class of matroids, and Rota's basis conjecture is still wide open in this case. In the spirit of the results in this paper, one may ask whether Rota's basis conjecture holds (with high probability) for random maximal acyclic sets in a given graph $G$ (more precisely, assuming that $G$ has $n$ vertices and $m$ components, then every maximal acyclic set $F\su E(G)$ has size $n-m$, and we can consider independent random sets $F_1,\dots,F_{n-m}\su E(G)$ each chosen uniformly at random from the collection of all maximal acyclic set $F\su E(G)$). A particularly natural case of this problem is the case where $G$ is the complete graph on $n$ vertices. This leads to the following question.

\begin{problem}\label{ref-problem-complete-graph}
Let $K_n$ be the complete graph on $n$ vertices, and let $F_1,\dots,F_{n-1}\su E(G)$ be independent uniformly random spanning trees of $K_n$. Prove that with probability $1-o(1)$, Rota's basis conjecture is satisfied for $F_1,\dots,F_{n-1}\su E(G)$, i.e.\ that with probability $1-o(1)$ the multi-set $F_1\cup\dots\cup F_{n-1}$ can be partitioned into $n-1$ spanning trees of $K_n$ which are transversal with respect to the original spanning trees $F_1,\dots,F_{n-1}$.
\end{problem}

Here, a spanning tree $F\su F_1\cup \dots\cup F_{n-1}$ is called transversal with respect to $F_1,\dots,F_{n-1}$, if it contains exactly one edge from each of $F_1,\dots,F_{n-1}$ (where $F_1,\dots,F_{n-1}$ are interpreted as subsets of the multi-set $F_1\cup \dots \cup F_{n-1}$ in the natural way, and if an edge $e$ appears in two or more of the spanning trees $F_1,\dots,F_{n-1}$, then the multiple copies of $e$ in the multi-set $F_1\cup \dots \cup F_{n-1}$ are distinguished by which set $F_i$ they came from).

It is well-known that graphic matroids are representable (i.e.\ they can be represented by linear independence relations in vector spaces). Indeed, given a graph with vertex set $\{1,\dots,n\}$, for each edge $ij$ with $1\leq i<j\leq n$, one can consider the vector $e_i-e_j\in \mathbb{R}^n$ (where $e_1,\dots,e_n$ denotes the standard basis of $\mathbb{R}^n$). Then a set of edges is acyclic if and only if the corresponding set of vectors in $\mathbb{R}^n$ is linearly independent.

Via this connection, it is easy to see that Problem \ref{ref-problem-complete-graph} is actually equivalent to proving Rota's basis conjecture for random bases in the subset
\[T=\{e_i-e_j\mid 1\leq i<j\leq n\}\su \mathbb{R}^n.\]
More precisely, Problem \ref{ref-problem-complete-graph} is equivalent to proving that for independent random bases $B_1,\dots,B_n\su T$ of $\mathbb{R}^n$ (where each $B_i$ is chosen uniformly at random from the collection of bases of $\mathbb{R}^n$ that are subsets of $T$), Rota's basis conjecture holds with probability $1-o(1)$. In other words, Problem \ref{ref-problem-complete-graph} is equivalent to proving a version of Theorem \ref{thm-3} for the set $T$ defined above.

Note that this set $T$ is not $c$-dispersed for any fixed $0<c<1$ (for large $n$), hence Theorem \ref{thm-3} does not apply to this set $T$. It would be interesting to generalize Theorem  \ref{thm-3} to a wider class of sets $T$, and in particular to resolve Problem \ref{ref-problem-complete-graph}.


\begin{thebibliography}{99}

\bibitem{aharoni-berger}  R. Aharoni and E. Berger, \textit{The intersection of a matroid and a simplicial complex}, Trans. Amer. Math. Soc. \textbf{358} (2006), 4895--4917.

\bibitem{alon-spencer} N. Alon and J. H. Spencer, \textit{The probabilistic method}, Fourth edition, Wiley Series in Discrete Mathematics and Optimization, John Wiley \& Sons, Inc., Hoboken, NJ, 2016. 

\bibitem{bucic-et-al} M. Buci\'{c}, M. Kwan, A. Pokrovskiy, and B. Sudakov, \textit{Halfway to Rota's basis conjecture}, Int. Math. Res. Not. 2020, 8007--8026.

\bibitem{polymath} T. Chow, \textit{Rota's conjecture}, Polymath Wiki Page, \url{https://asone.ai/polymath/index.php?title=Rota\%27s_conjecture}.

\bibitem{dong-geelen}  S. Dong and J. Geelen,\textit{ Improved bounds for Rota's basis conjecture}, Combinatorica \textbf{39} (2019), 265--272.

\bibitem{drisko}  A. A. Drisko, \textit{On the number of even and odd Latin squares of order $p+1$}, Adv. Math. \textbf{128} (1997), 20--35.

\bibitem{ferber} A. Ferber, private communication, 2021.

\bibitem{geelen-humphries} J. Geelen and P. J. Humphries, \textit{Rota's basis conjecture for paving matroids}, SIAM J. Discrete Math. \textbf{20} (2006), 1042--1045. 

\bibitem{geelen-webb}  J. Geelen and K. Webb, \textit{On Rota's basis conjecture}, SIAM J. Discrete Math. \textbf{21} (2007), 802--804.

\bibitem{glynn}  D. G. Glynn, \textit{The conjectures of Alon-Tarsi and Rota in dimension prime minus one}, SIAM J. Discrete Math. \textbf{24} (2010), 394--399.

\bibitem{huang-rota} R. Huang and G.-C. Rota,  \textit{On the relations of various conjectures on Latin squares and straightening coefficients}, Discrete Math. \textbf{128} (1994), 225--236. 


\bibitem{prokrovskiy}  Alexey Pokrovskiy, \textit{Rota's Basis Conjecture holds asymptotically}, preprint, 2020, arXiv:2008.06045.

\bibitem{wild} M. Wild, \textit{On Rota's problem about $n$ bases in a rank $n$ matroid}, Adv. Math. \textbf{108} (1994), 336--345.

\end{thebibliography}
\end{document}